\newcommand*{\mailto}[1]{\href{mailto:#1}{\nolinkurl{#1}}}
\newcommand{\eps}{\epsilon}
\newcommand{\R}{\mathbb{R}}
\newcommand{\N}{\mathbb{N}}
\newcommand{\mF}{\mathcal{F}}
\newcommand{\mD}{\mathcal{D}}
\newcommand{\Cc}{C_0^\infty}
\newcommand{\supp}{\text{\normalfont{supp}}}
\newcommand{\vp}{\varphi}
\newcommand{\hlf}{\frac{1}{2}}
\newcommand{\bbo}{\mathds{1}}
\newcommand{\hY}{\hat{Y}}
\theoremstyle{plain}
\newtheorem{thm}{Theorem}[section]
\newtheorem{lem}[thm]{Lemma}
\newtheorem{prop}[thm]{Proposition}
\newtheorem{defn}[thm]{Definition}
\newtheorem{exmp}[thm]{Example}
\newtheorem{cor}[thm]{Corollary}
\theoremstyle{plain}
\newtheorem{rem}[thm]{Remark}
\newtheorem*{note}{Note}
\begin{document}
	
	\title[Lipschitz stability for the HS equation]{Lipschitz stability for the Hunter--Saxton equation}

	\author[K. Grunert]{Katrin Grunert}
	\address{Department of Mathematical Sciences\\ NTNU Norwegian University of Science and Technology\\ NO-7491 Trondheim\\ Norway}
	\email{\mailto{katrin.grunert@ntnu.no}}
	\urladdr{\url{https://www.ntnu.edu/employees/katrin.grunert}}
	
	\author[M. Tandy]{Matthew Tandy}
	\address{Department of Mathematical Sciences\\
	  NTNU Norwegian University of Science and Technology\\
	  NO-7491 Trondheim\\ Norway}
	\email{\mailto{matthew.tandy@ntnu.no}}
	\urladdr{\url{https://www.ntnu.edu/employees/matthew.tandy}}

	\thanks{We acknowledge support by the grants {\it Waves and Nonlinear Phenomena (WaNP)} and {\it Wave Phenomena and Stability - a Shocking Combination (WaPheS)}  from the Research Council of Norway. }  
	\subjclass[2020]{Primary: 35B35, 37L15; Secondary: 35Q35, 35L67.}
	\keywords{Hunter--Saxton equation, Lipschitz stability, $\alpha$-dissipative solutions}

	\begin{abstract}
		We study Lipschitz stability in time for $\alpha$-dissipative solutions to the Hunter--Saxton equation, where $\alpha \in [0, 1]$ is a constant. We define metrics in both Lagrangian and Eulerian coordinates, and establish Lipschitz stability for those metrics.
	\end{abstract}

	\maketitle
	
\section{Introduction}
	In this paper, we investigate the Lipschitz stability of $\alpha$-dissipative solutions of the initial value problem for the Hunter--Saxton equation,
	\begin{equation}\label{eqn:HS}\tag{HS}
		u_t(x,t) + uu_x(x,t) = \frac{1}{4}\Bigg(\int_{-\infty}^x u_x^2(y,t)\ dy - \int_x^{+\infty}u_x^2(y,t)\ dy\Bigg),
	\end{equation}
	with initial data $u(x,0) = u_0(x)$.
	
		This equation was introduced by Hunter and Saxton as a model for the nonlinear instability in the director field of a nematic liquid crystal \cite{MR1135995}. Further, it is connected to the high frequency limit of the Camassa--Holm equation \cite{MR1677529}.
	
	Solutions to \eqref{eqn:HS} may develop singularities, known as wave breaking, in finite time. That is, $u_x\to-\infty$ spatially pointwise, while $u$ remains continuous and bounded.
	
	One defines the energy density of the solution to be $u_x^2$. Then, at wave breaking, one sees that some of the energy will concentrate on a set of measure zero. Hence, the energy density in general is not absolutely continuous. Instead, the energy is described by a positive Radon measure. The question then becomes, how does one define the solution past wave breaking? This is determined by how one manipulates the energy past wave breaking. In general, one has the freedom to take as much energy away as one pleases \cite{ MR3860266}. Two important cases are well studied. Conservative solutions, whom lose no energy past wave breaking, and dissipative solutions, whom remove the energy that has concentrated on sets of measure zero at wave breaking.  For both the conservative \cite{MR2653980, MR3573580}, and dissipative case \cite{MR2191785}, existence of solutions has been shown. Uniqueness for the dissipative case was shown in \cite{MR2796054}. Further, the dissipative case is the solution with maximal energy loss for a given initial data, as shown in \cite{MR3451933}. The method used in this paper has been applied to the Camassa--Holm equation to prove similar results \cite{MR2372478, MR3005543}, and existence in the case in which only part of the energy may be removed \cite{MR3295963}. A different approach was used to show existence and uniqueness to the differentiated Hunter Saxton equation, $v_t + uv_x = -\frac{1}{2}v^2, v = u_x $ under the assumption that $u(0,t)=0$ for all $t$, on the positive real line, with compactly supported initial data\cite{MR1799274}. Note that solutions of this equation, extended antisymmetric to the whole real line, must not necessarily be solutions to \eqref{eqn:HS}, due to the requirement that $u(0,t) =0$ for all time, which we do not have. 
	
	We are more concerned with the stability of solutions. This builds upon the work of \cite{MR3860266}, for which Lipschitz stability was shown for a given time-dependant distance. We intend to overcome a few assumptions of this paper. Namely, we wish to include the possibility of breaking at time zero, to build a metric that relies on the current energy of the system, rather than the past energy, and to rid the requirement of a purely absolutely continuous initial energy measure in the dissipative case. Lipschitz stability was found for the conservative case using different metrics in \cite{MR3573580, MR3941227}.
	
	Solutions to the problem are found using a generalization of the method of characteristics. For explanatory purposes, formally suppose for now that $u$ is smooth, and its energy density is given by $u_x^2$. Following the work of \cite{MR3573580}, we shift from the Eulerian variable $u$ to Lagrangian variables $ (y,U,V) $, whom satisfy
	\[ y_t(\xi, t) = u(y(\xi, t), t), \]
	\[ U(\xi, t) = u(y(\xi, t), t), \]
	\[ V(\xi, t) = \int_{-\infty}^{y(\xi, t)} u^2_x(z, t)\ dz, \]
	which we can define as long as the energy for the solution $u$ does not concentrate on sets of measure zero, i.e. until wave breaking happens.
	This then gives
	\begin{subequations}\label{eqn:SmoothLagSys}
		\begin{align}
			 y_t(\xi, t) &= U(\xi, t), \\
		 	U_t(\xi, t) &= \frac{1}{2} V(\xi, t) - \frac{1}{4} \lim_{\xi\to\infty}V(\xi, t), \\
			V_t(\xi, t) &= 0.
		\end{align}
	\end{subequations}
	This is a system of ordinary differential equations (ODEs) with initial data
	\begin{subequations}\label{eqn:InitialSmooth}
	\begin{align}
	y(\xi, 0) & = y_0(\xi), \\
	 U(\xi, 0) & = U_0(\xi) = u_0(y_0(\xi)), \\
	 V(\xi, 0) & = V_0(\xi) = \int_{-\infty}^{y_0(\xi)} u^2_x(z, 0)\ dz. 
	 \end{align}
	\end{subequations}
	Assuming energy does not initially concentrate on sets of measure zero, one can take $y_0(\xi) = \xi$. 
	
	Wave breaking then occurs when at least two characteristics meet. The time at which wave breaking occurs is given by
	\begin{equation}\label{def:tauSmooth}
		\tau(\xi) = \begin{cases}
			-2\frac{y_\xi(\xi, 0)}{U_\xi(\xi, 0)}, &\mbox\ U_\xi(\xi, 0)<0,\\
			0, &\mbox\ U_\xi(\xi, 0)=0=y_\xi(\xi, 0),\\
			+\infty, &\mbox\ \text{otherwise.}
		\end{cases}
	\end{equation}
	Up until wave breaking, the solution in Lagrangian coordinates is obtained by solving \eqref{eqn:SmoothLagSys}.  After wave breaking, how one continues is determined by how one manipulates the energy. For conservative solutions, one continues the solution using \eqref{eqn:SmoothLagSys}, retaining the energy in the system. For dissipative solutions, characteristics that interact lose their energy and stick together, given by setting $V_\xi(\xi, t)=0$ for $t\geq \tau(\xi)$. We consider the case of $\alpha$-dissipative solutions, for whom $V_\xi(\xi, t)=(1-\alpha)V_\xi(\xi, 0)$ for $t\geq \tau(\xi) > 0$. In particular, the system \eqref{eqn:SmoothLagSys} is replaced by
	\begin{subequations}\label{eqn:SmoothLagSysD}
		\begin{align}
			 y_t(\xi, t) &= U(\xi, t), \\
		 	U_t(\xi, t) &= \frac{1}{2} V(\xi, t) - \frac{1}{4} \lim_{\xi\to\infty}V(\xi, t),
		\end{align}
	\end{subequations}
	where 
	\begin{equation*}
	V(\xi,t)=\int_{-\infty}^\xi V_\xi(\eta,0)( 1- \alpha\bbo_{\{ r\in\R \mid t\geq \tau(r) > 0\}}(\eta))\ d\eta.
	\end{equation*}
	The more general $\alpha$-dissipative solution \cite{MR3860266} considers the situation in which $\alpha: \R \to [0,1)$, i.e. that the drop in energy depends on the position of the particle.
	
	There is no unique way of defining the initial characteristic $y_0(\xi)$. One cannot assume $y_0(\xi)=\xi$, as this doesn't account for energy initially concentrating on sets of measure zero. Due to this, one defines a transformation from Eulerian to Lagrangian coordinates, as seen in \cite{MR3860266}.
	In Section 2, we introduce the spaces we will be working in, and the mappings used to transform from Eulerian to Lagrangian coordinates and back. In addition, we state some known results we will make use of later in the paper. As the solution at time $t$ depends on how the energy was initially distributed, one must introduce an additional energy variable, $\nu$, which will provide a barrier we must overcome in our construction for the Eulerian metric. Additionally, transforming from Eulerian to Lagrangian variables introduces an extra coordinate, hence multiple Lagrangian coordinates represent the same Eulerian coordinates, thus we introduce equivalence classes, whose elements are related by a relabelling.
	
	Section 3 focuses on the construction of a metric which is Lipschitz in time for the Lagrangian coordinate system. For conservative solutions, the metric can be defined using the normal $L^\infty(\R)$, $L^1(\R)$, and $L^2(\R)$ norms, as no energy in the system has been lost, leading to a smooth metric \cite{MR3573580}. For dissipative solutions, energy may have suddenly dropped in the past, and the challenge is constructing a metric which doesn't jump upwards over these drops in energy, doesn't split apart the multiple Lagrangian solutions representing the same Eulerian solution, and which renders the flow Lipschitz continuous in time, giving the solutions are continuous with respect to the initial data in our metric.
	
	Finally, Section 4 contains our main result. Using the construction in Lagrangian coordinates we can define a metric in Eulerian coordinates. This then inherits the Lipschitz continuity in time from our previous metric. However, the metric must account for all possible drops in energy that could have occurred in the past, that is, all possible past energy densities $\nu$.
	
	\section{The Lagrangian and Eulerian variables}
	Before continuing, we define the sets in which the Eulerian and Lagrangian coordinates lie. We follow the construction in \cite{MR2653980}.
	We begin by defining the Banach space and associated norm
	\[
		E \coloneqq \{ f\in L^\infty(\R) \mid f'\in L^2(\R)\}, \quad\ \|f\|_{E_2} = \|f\|_\infty + \|f'\|_2,
	\]
	and define
	\[
		H_i \coloneqq H^1(\R) \times \R^i, \quad\ i=1,2,
	\]
	with the norms
	\[
		\|(f,a)\|_{H_1} = \sqrt{\|f\|^2_{H^1} + |a|^2}, 
		\quad\
		\|(f,a, b)\|_{H_2} = \sqrt{\|f\|^2_{H^1} + |a|^2 + |b|^2}, 
	\]
	where $H^1(\R)$ is the usual Sobolev space. We then split $\R$ into $(-\infty ,1)$,and $(-1,\infty)$, and choose $\chi^-, \chi^+ \in C^\infty(\R)$ satisfying the following three properties
	\begin{itemize}
		\item $ \chi^- + \chi^+ = 1 $,
		\item $ 0\leq \chi^+ \leq 1 $,
		\item $ \supp(\chi^-) \subset (-\infty, 1) $ and $ \supp(\chi^+) \subset (-1, \infty) $.
	\end{itemize}
	We now introduce the mappings
	\begin{subequations}
	\begin{align}\label{map:R}
		R_1: H_1 \to E&  \quad\ (f, a) \mapsto f + a \cdot \chi^+,\\
		R_2: H_2 \to E&  \quad\ (f, a, b) \mapsto f + a \cdot \chi^+ + b \cdot \chi^-.
	\end{align}
	\end{subequations}
	These mappings are linear and continuous, due to functions in $H^1(\R)$ being continuous. They are also injective. We show this for $R_2$, and $R_1$ follows with $b=0$. If we have two equal elements $F$ and $G$ in the codomain, then there exists $f,g \in H^1(\R)$, and $a_f,b_f,a_g,b_g\in\R$ such that
	\[
		 f(\xi) + a_f \cdot \chi^+(\xi) + b_f \cdot \chi^-(\xi) = F(\xi) = G(\xi) = g(\xi) + a_g \cdot \chi^+(\xi) + b_g \cdot \chi^-(\xi).
	\]
	for all $\xi \in \R$.
	Taking the limits at $\pm\infty$, we find $a_f = a_g$ and $b_f = b_g$. It then immediately follows that $f=g$ as required.
	
	From these we define the following Banach spaces and associated norms,
	\[
		E_1 \coloneqq R_1(H_1), \quad\ \|f\|_{E_1} = \|R_1^{-1}(f)\|_{H_1},
	\]
	\[
		E_2 \coloneqq R_2(H_2), \quad\ \|f\|_{E_2} = \|R_2^{-1}(f)\|_{H_2}.
	\]
	\begin{rem}[The choice of $\chi$ does not change $E_1$]
		Consider $\chi^+$ and $\hat{\chi}^+$ satisfying the above conditions. Define $R_1$ and $\hat{R}_1$ as one would expect, reflecting \eqref{map:R}. We show $R_1(H_1) = \hat{R}_1(H_1)$. Consider $f\in R_1(H_1)$. Then there exists $g \in H^1(\R)$ and $a\in \R$ such that
		\[
			f = g + a \cdot \chi^+.
		\]
		Noting that $\chi^+ - \hat{\chi}^+$ is in $C^\infty_c(\R)$, we have
		\[
			f - a \cdot  \hat{\chi}^+ = g + a \cdot (\chi^+ - \hat{\chi}^+) \in H^1(\R),
		\]
		therefore $f = f -a \cdot \hat{\chi}^+ + a \cdot \hat{\chi}^+$ is in $\hat{R}_1(H_1)$, thus demonstrating $R_1(H_1) \subset \hat{R}_1(H_1)$. The same approach can be used to show $\hat{R}_1(H_1) \subset R_1(H_1)$.
		
		It can also be shown that $E_2$ does not rely on the choice of $\chi^-$ and $\chi^+$.
	\end{rem}
	Using these, we define the Banach space $B$, and associate with it the expected norm
	\[
		B \coloneqq E_2 \times E_2 \times E_1 \times E_1, \quad\, \|(f_1,f_2,f_3,f_4)\|_B = \|f_1\|_{E_2} + \|f_2\|_{E_2} + \|f_3\|_{E_1} + \|f_4\|_{E_1}.
	\]
	Wave breaking may occur at time zero, or may have even occurred in the past. The measure $\mu$ corresponds to the energy of the system at time zero. To model previous wave breaking and the corresponding energy loss, an additional energy measure $\nu$ must be supplied. This variable carries the initial energy forward in time (i.e. $\nu_t(\R, t)=0$, as we will see when mapping from Lagrangian to Eulerian coordinates). Corresponding to $\nu$ when transforming to Lagrangian coordinates, a variable $H$ is introduced. This will also preserve the energy forward in time. The variable $V$ corresponds to the current energy $\mu$. This variable is necessary for the construction of a semigroup of solutions in Lagrangian coordinates.
	
		We begin with the set of Eulerian coordinates:
	
	\begin{defn}[Set of Eulerian coordinates - $\mD$]\label{def:D}
		The set $\mD$ contains all Eulerian variables $ Y=(u,\mu,\nu) $ satisfying the following
		\begin{itemize}
			\item $ u\in E_2 $,
			\item $ \mu \leq \nu \in \mathcal{M}^+(\R) $,
			\item $ \mu\big((-\infty, x)\big) - \chi_+(x)\mu(\R) \in L^2(\R) $,
			\item $ \mu_{ac} = u_x^2\ dx$, 
			\item If $\alpha=1, \nu_{ac} = \mu =  u_x^2\ dx, $
			\item If $0\leq \alpha < 1$, $ \frac{d\mu}{d\nu}(x) \in \{1, 1-\alpha\}$, and $\frac{d\mu}{d\nu} = 1$ if $u_x(x)<0$,
		\end{itemize}
		where $\mathcal{M}^+(\R)$ is the set of all finite, positive Radon measures on $\R$.
	\end{defn}
	
	Followed by the Lagrangian coordinates:
	
	\begin{defn}[Set of  Lagrangian coordinates - $\mF$]
		Let the set $\mF$ be the set of all $ X=(y,U,H,V)$, where $(y-\text{id},U,H,V)\in B$, satisfying the following properties
		\begin{itemize}
			\item $y-id, U, H, V \in W^{1, \infty}(\R)$,
			\item $y_\xi, H_\xi \geq 0$, and there exists a constant $c$ such that $0<c<y_\xi + H_\xi$ a.e.,
			\item $y_\xi V_\xi = U_\xi^2$,
			\item $ 0\leq V_\xi \leq H_\xi $ a.e.,
			\item If $\alpha = 1, y_\xi(\xi) = 0$ implies $V_\xi(\xi) = 0$, $y_\xi(\xi)>0$ implies $V_\xi(\xi) = H_\xi(\xi)$ a.e.,
			\item If $0\leq \alpha < 1$, there exists $\kappa: \R \to \{(1-\alpha), 1\}$ such that $V_\xi(\xi) = \kappa(\xi) H_\xi(\xi)$ a.e., with $\kappa(\xi) = 1$ for $U_\xi(\xi) < 0$.
		\end{itemize}
		Define the set $\mF_0$ as
		\[
			\mF_0 \coloneqq \big\{X\in\mF \mid y+H=\text{id} \big\}.
		\]
	\end{defn}

	The $\alpha$-dissipative solution $X(t)$ for the equation \eqref{eqn:HS} in Lagrangian variables is then given by the following ODE system, with initial data $X(0)\in\mF$,
	\begin{subequations}\label{eqn:LagSys}
		\begin{align}
			 y_t(\xi, t) &= U(\xi, t), \\
			 U_t(\xi, t) &= \frac{1}{2} V(\xi, t) - \frac{1}{4} \lim_{\xi\to\infty}V(\xi, t), \\ \label{eqn:LagSys3}
			 H_t(\xi, t) &= 0,\\ \label{eqn:LagSys4}
			 V(\xi, t) &= \int_{-\infty}^\xi V_\xi(\eta, 0 ) (1 - \alpha\mathds{1}_{\{r \in \R\mid t\geq \tau(r) > 0\}})(\eta)\ d\eta,
		\end{align}
	\end{subequations}
	for whom existence and uniqueness was shown in \cite{MR3860266}, in addition to the fact that the wave breaking time is given by
	\begin{equation}\label{def:tau}
		\tau(\xi) = \begin{cases}
			-2\frac{y_\xi(\xi, 0)}{U_\xi(\xi, 0)}, &\mbox\ U_\xi(\xi, 0)<0,\\
			0, &\mbox\ U_\xi(\xi, 0)=0=y_\xi(\xi, 0),\\
			+\infty, &\mbox\ \text{otherwise.}
		\end{cases}
	\end{equation}
	
	Transforming from Eulerian to Lagrangian coordinates and back is achieved by the following mappings, which are inverses, with respect to equivalence classes, of each other \cite{MR3573580, MR3860266} and which developed from the transformations defined for the Camassa--Holm equation in \cite{MR2372478}.
	\begin{defn}[Mapping $L:\mD\to\mF_0$]\label{map:EultoLag}
		The following defines the mapping $L:\mD\to\mF_0$, from Eulerian to Lagrangian coordinates,
		\begin{subequations}
			\begin{align}
				y (\xi) &= \sup\{ x\in\R \mid x + \nu\big((-\infty ,x)\big) < \xi \},\\
				U(\xi) &= u(y(\xi)),\\
				H(\xi) &= \xi - y(\xi),\\
				V(\xi) &= \int_{-\infty}^\xi H_\xi(\eta) \frac{d\mu}{d\nu}\circ(y(\eta))\ d\eta. \label{map:L}
			\end{align}
		\end{subequations}
	\end{defn}

	\begin{defn}[Mapping $M:\mF \to \mD$]\label{map:LagtoEul}
		The following defines the mapping $M:\mF\to\mD$, from Lagrangian to Eulerian coordinates,
		\begin{subequations}
			\begin{align}
			u(x) &= U(\xi), \quad\ \text{ for all } \xi\in\R \text{ such that } x=y(\xi),\\
			\mu &= y_{\#}(V_\xi\ d\xi),\\
			\nu &= y_{\#}(H_\xi\ d\xi).
			\end{align}
		\end{subequations}
		Here, we have used the push forward measure for a measurable function $f$ and $\mu$-measurable set $f^{-1}(A)$, i.e.,
		\[
		f_{\#}(\mu)(A) \coloneqq \mu(f^{-1}(A)).
		\]
	\end{defn}
	The mapping $L$ introduces an additional coordinate when mapping from Eulerian to Lagrangian coordinates, hence the mapping is not one-to-one. On the other hand, one can introduce an equivalence relation on $\mF$, equating Lagrangian coordinates representing the same Eulerian coordinates.
	\begin{defn}[Equivalence relation on $\mF$]\label{defn:equivRel}
		Let $G$ be the group of homeomorphisms $f:\R\to\R$ satisfying
		\begin{equation}\label{proprt:eqf}
			f-\text{id} \in W^{1,\infty}(\R), \quad\ f^{-1}-\text{id} \in W^{1,\infty}(\R), \quad\  f_\xi - 1 \in L^2(\R).
		\end{equation}
		We define the group action $\bullet:\mF\times G \to \mF$, called the relabelling of $X$ by $f$, as
		\[
			(X,f) \mapsto X\bullet f=(y\circ f, U\circ f, H\circ f, V\circ f).
		\]
		Hence, one defines the equivalence relation $\sim$ on $\mF$ by
		\[
			X_A\sim X_B \text{ if there exists }f\in G \text{ such that } X_A = X_B\bullet f.
		\]
		Finally, define the mapping $\Pi:\mF \to \mF_0$, which gives one representative in $\mF_0$ for each equivalence class,
		\[
			\Pi(X) = X\bullet (y + H)^{-1}.
		\]
	\end{defn}
	\begin{note}
		We have used in our definition for $\Pi$ that $(y + H)^{-1} \in G$.
		We will simply write $\Pi X$, though this is not a linear transformation.
	\end{note}
	\begin{lem}\cite[Proposition 3.5]{MR3860266}
		Let $X, \tilde{X} \in \mF$, and assume $X\sim \tilde{X}$, then
		\[
			M(X) = M(\tilde{X}).
		\] 
	\end{lem}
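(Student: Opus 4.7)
The plan is to verify that each of the three components of $M(X) = (u, \mu, \nu)$ is invariant under the relabelling $X \mapsto X \bullet f$, where $f \in G$ is the homeomorphism witnessing $X \sim \tilde{X}$. Write $\tilde{X} = X \bullet f = (y\circ f, U\circ f, H\circ f, V\circ f)$ and denote $M(\tilde{X}) = (\tilde{u}, \tilde{\mu}, \tilde{\nu})$.

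First I would handle $u$. Since $f$ is a homeomorphism of $\R$, the range of $\tilde{y} = y\circ f$ equals the range of $y$. For $x$ in this common range, pick $\eta\in\R$ with $\tilde{y}(\eta) = x$ and set $\xi = f(\eta)$, so that $y(\xi) = x$. Then by Definition \ref{map:LagtoEul},
\[
\tilde{u}(x) = \tilde{U}(\eta) = U(f(\eta)) = U(\xi) = u(x).
\]
This is well-defined because $U$ is constant on level sets of $y$: the constraint $y_\xi V_\xi = U_\xi^2$ combined with $y_\xi \ge 0$ forces $U_\xi = 0$ wherever $y_\xi = 0$.

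Next, for the push-forward measures, it suffices to check agreement on Borel sets $A\subset\R$. Because $f$ is a homeomorphism,
\[
\tilde{y}^{-1}(A) = (y\circ f)^{-1}(A) = f^{-1}\bigl(y^{-1}(A)\bigr).
\]
The conditions \eqref{proprt:eqf} make $f$ a bi-Lipschitz homeomorphism, hence both $f$ and $f^{-1}$ are absolutely continuous. Since $V$ is also Lipschitz, the chain rule gives $(V\circ f)_\xi(\eta) = V_\xi(f(\eta))\, f_\xi(\eta)$ for a.e.\ $\eta$, and the change-of-variables formula for bi-Lipschitz maps yields
\[
\tilde{\mu}(A) = \int_{\tilde{y}^{-1}(A)} \tilde{V}_\xi(\eta)\, d\eta = \int_{f^{-1}(y^{-1}(A))} V_\xi(f(\eta))\, f_\xi(\eta)\, d\eta = \int_{y^{-1}(A)} V_\xi(\xi)\, d\xi = \mu(A).
\]
The identical argument with $H$ in place of $V$ gives $\tilde{\nu} = \nu$.

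The main obstacle is making the change-of-variables step fully rigorous: one must exploit the $W^{1,\infty}$ regularity of both $f - \text{id}$ and $f^{-1} - \text{id}$ to justify absolute continuity of $f$ and the chain rule applied to the Lipschitz composition $V\circ f$, then invoke the standard bi-Lipschitz change of variables. Everything else reduces to bookkeeping with the definitions of $\bullet$ and the push-forward.
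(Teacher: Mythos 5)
Your proposal is correct and follows essentially the same route as the paper: verify $\tilde{u}=u$ pointwise via the substitution $\xi = f(\eta)$, and verify $\tilde\mu=\mu$ and $\tilde\nu=\nu$ on Borel sets by the change of variables $\eta=f(\xi)$ applied to the push-forward integrals. The extra remarks you include (well-definedness of $u$ from $y_\xi V_\xi = U_\xi^2$, and the bi-Lipschitz justification of the chain rule and change of variables) are correct refinements of steps the paper treats as routine.
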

	\begin{proof} Let $f\in G$ be such that $\tilde{X}=X\bullet f$. As $f$ is a bijection, 
		\begin{alignat*}{2}
			\tilde{u}(x) 
			&= \tilde{U}(\xi), \quad\ &&\phantom{i}\text{for all } \xi\in\R \text{ such that } x=\tilde{y}(\xi),\\
			&= (U\circ f)(\xi), \quad\  &&\text{ for all } \xi\in\R \text{ such that } x=(y\circ f)(\xi),\\
			&= U(\eta), \quad\ &&\text{ for all }\eta=f(\xi)\in\R \text{ such that } x=y(\eta),\\
			&= u(x).
		\end{alignat*}
		For any Borel set $A\subset \R$, we have, using the substitution $\eta = f(\xi)$,
		\begin{align*}
		\tilde{\mu}(A) 
		&= \int_{(y\circ f)^{-1}(A)} (V\circ f)_\xi(\xi)\ d\xi\\
		&= \int_{y^{-1}(A)} V_\xi(\eta)\ d\eta 
		= \mu(A).
		\end{align*}
		The proof for $\nu$ follows from the same calculations as $\mu$.
	\end{proof}
	Relabelling can be done either initially, or after a given time, and one obtains the same solution, as the following proposition states.
	\begin{prop}\label{prop:solRel}\cite[Proposition 3.7]{MR3860266}
		Define the solution operator $S_t:\mF \to \mF$, $X \mapsto S_t(X)$ as giving the solution at time $t$ to the ODE system \eqref{eqn:LagSys} with initial data $X\in\mF$. Then
		\[
			S_t(X \bullet f) = S_t(X) \bullet f,
		\]
		for any $f\in G$.
	\end{prop}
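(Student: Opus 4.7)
The plan is to reduce the claim to uniqueness for the Lagrangian system \eqref{eqn:LagSys} established in \cite{MR3860266}. Set $\hat X(t) \coloneqq S_t(X)\bullet f$; since $\mF$ is stable under the relabelling action, $\hat X(t)\in\mF$ for every $t$, with $\hat X(0)=X\bullet f$. It then suffices to verify that $\hat X(t)$ itself satisfies \eqref{eqn:LagSys}, at which point uniqueness forces $\hat X(t)=S_t(X\bullet f)$, which is exactly the desired identity.

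First I would dispatch the three differential equations and the limit appearing in the $U$-equation. Writing the components as $\hat y(\xi,t)=y(f(\xi),t)$, $\hat U(\xi,t)=U(f(\xi),t)$, $\hat H(\xi,t)=H(f(\xi),t)$, and differentiating in $t$ with $f$ time-independent immediately gives $\hat y_t=\hat U$ and $\hat H_t=0$ pointwise in $\xi$. For the $U$-equation the only extra check is that $\lim_{\xi\to\infty}\hat V(\xi,t)=\lim_{\xi\to\infty}V(\xi,t)$, and this follows because $f$ is a homeomorphism with $f-\mathrm{id}\in W^{1,\infty}(\R)$, so $f(\xi)\to\infty$ as $\xi\to\infty$.

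The main work lies in verifying \eqref{eqn:LagSys4} for $\hat V$. I would change variables $\eta=f(\zeta)$ in the integral defining $V(f(\xi),t)$; this is justified because $f,f^{-1}\in\mathrm{id}+W^{1,\infty}(\R)$ forces $f_\xi>0$ a.e., and the chain rule $(V\circ f)_\xi=(V_\xi\circ f)\,f_\xi$ identifies the resulting integrand with $\hat V_\xi(\zeta,0)\bigl(1-\alpha\mathds{1}_{\{r:\,t\ge\tau(r)>0\}}(f(\zeta))\bigr)$. It then remains to show $\tau\circ f=\hat\tau$ a.e., where $\hat\tau$ is the wave breaking time associated to the initial data $X\bullet f$; this is immediate from the explicit formula \eqref{def:tau}, since $\hat y_\xi(\zeta,0)=y_\xi(f(\zeta),0)\,f_\xi(\zeta)$ and $\hat U_\xi(\zeta,0)=U_\xi(f(\zeta),0)\,f_\xi(\zeta)$, so the $f_\xi$ factors cancel both in the ratio and in the sign condition on $U_\xi$. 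The main obstacle I anticipate is this bookkeeping around $\tau$: confirming that composition with $f$ preserves each of the three cases of \eqref{def:tau} modulo a null set, so that the indicator set $\{r:t\ge\tau(r)>0\}$ pulls back under $f$ to $\{\zeta:t\ge\hat\tau(\zeta)>0\}$, after which the identity $\hat V(\xi,t)=\int_{-\infty}^\xi\hat V_\xi(\zeta,0)\bigl(1-\alpha\mathds{1}_{\{r:\,t\ge\hat\tau(r)>0\}}(\zeta)\bigr)\,d\zeta$ drops out and uniqueness concludes the proof.
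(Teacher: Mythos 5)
Your argument is correct and is exactly the standard proof of this equivariance statement: the paper itself gives no proof, citing \cite[Proposition 3.7]{MR3860266}, and the argument there is the same reduction to uniqueness after checking that $S_t(X)\bullet f$ solves \eqref{eqn:LagSys} with data $X\bullet f$. All the delicate points are handled properly — in particular the observation that $f_\xi$ is bounded below by a positive constant a.e.\ (from $f^{-1}-\mathrm{id}\in W^{1,\infty}(\R)$), so that the $f_\xi$ factors cancel in \eqref{def:tau} and $\hat\tau=\tau\circ f$ a.e., which is the one step where the proof could have gone wrong.
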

	For completeness, we include the definition of a weak $\alpha$-dissipative solution to \eqref{eqn:HS}. Existence of solutions, using the generalized method of characteristics, was found in \cite{MR3860266}.
	\begin{defn}
		$(u,\mu,\nu)$ is a weak $\alpha$-dissipative solution to \eqref{eqn:HS} with initial data $(u_0, \mu_0, \nu_0)\in\mD$, if
		$(u, \mu, \nu) \in \mD$ satisfies the initial data, and
		\begin{subequations}
			\begin{align}
				 u &\in C^{0,\hlf}(\R\times[0,T],\R), \quad\ \text{ for all } T\in [0,+\infty),\\
				 \nu &\in C_{weak*}([0,+\infty], \mathcal{M}^+(\R)),\\
				 \nu(t)(\R) &= \nu_0(\R), \quad\ \text{ for all } t\in [0,+\infty),\\
				 d\mu(t) &= d\mu_{ac}(t)^- + (1-\alpha)d\mu_s(t)^-,\\
				 \mu(s) &\overset{\ast}{\rightharpoonup} \mu(t), \quad\  \text{ for all } t \in [0,+\infty) \text{ from above},\\
				 \mu(s) &\overset{\ast}{\rightharpoonup} \mu(t)^{-}, \quad\ \text{ for all } t \in [0,+\infty) \text{ from below},
			\end{align}
		\end{subequations}
		and, for all test functions $\vp \in \Cc(\R\times [0,+\infty))$, \eqref{eqn:HS} is satisfied in the distributional sense, that is
		\begin{equation}
			\int_0^{\infty}\int_\R \bigg(u\vp_t + \hlf u^2 \vp_x - \frac{1}{4}\bigg(\int_{-\infty}^x u_x^2\ dy - \int_x^{\infty}u_x^2\ dy\bigg)\vp\bigg)\ dx dt = -\int_\R u_0 \vp_0\ dx,
		\end{equation}
		where $\vp_0(x) = \vp(x, 0)$. Further, for each non-negative test function $\phi \in \Cc(\R\times [0,+\infty))$, one must have
		\begin{equation}
			\int_0^{+\infty} \int_\R (\phi_t + u \phi_x)\ d\mu(t) dt \geq - \int_\R \phi_0\ d\mu_0.
		\end{equation}
	\end{defn}
	
	For a complete work through of an $\alpha$-dissipative problem, see Example \ref{exmp:adiss}.
		
	\section{Lipschitz stability in Lagrangian coordinates}
	
	We now have the necessary prerequisites to start constructing a metric in Lagrangian coordinates such that the solution to the ODE system \eqref{eqn:LagSys} is Lipschitz continuous.
	
	Before constructing our metric, we ease the notation. Given $X_i$, $X_j \in \mF$, we define the following sets
	\begin{subequations} \label{eqn:Rsplit}
		\begin{align}
			A_i(t) &\coloneqq \big\{ \xi \in \R \mid U_{i, \xi}(\xi, t) \geq 0 \big\},\\
			A_{i,j}(t) &\coloneqq A_i(t) \cap A_j(t),\\
			B_{i, j}(t) &\coloneqq \big\{ \xi \in \R \mid t <  \tau_{i}(\xi) = \tau_{j}(\xi) <+\infty \big\},\\
			\Omega_{i,j}(t) &\coloneqq A_{i,j}(t) \cup 	B_{i,j}(t).
		\end{align}
	\end{subequations}
	We use these to split the real line into two halves. Define, for $X_1$, $X_2 \in \mF$,
	\begin{equation}
		G_{12}(\xi, t) \coloneqq |V_{1,\xi} - V_{2,\xi}|(\xi, t) \mathds{1}_{\Omega_{12}(t)}(\xi) + \big( V_{1,\xi} \vee V_{2,\xi} \big)(\xi, t) \mathds{1}_{\Omega_{12}^c(t)}(\xi),
	\end{equation}
	where we have used the notation $a\vee b = \max\{a, b\}$.
	
	We can now define our metric $ d:\mF^2\to\R $ as
	\begin{align}\nonumber
		d(X_1,X_2) 
		&\coloneqq 	\|y_1-y_2\|_\infty 
		+ \|U_1-U_2\|_\infty 
		+ \|y_{1,\xi}-y_{2,\xi}\|_2\\ \label{def:d}
		&\quad\ + \|U_{1,\xi}-U_{2,\xi}\|_2 
		+ \|H_1 - H_2\|_\infty 
		+ \|G_{12} \|_1 + \| G_{12} \|_2.
	\end{align}

	A naive approach would be to use the $L^1(\R)$ and $L^2(\R)$ norms of $V_{1,\xi}-V_{2,\xi}$. However upon wave breaking, these norms could suddenly jump upwards. Consider, for instance, the fully dissipative case, i.e. $\alpha=1$, with $X_1$ and $X_2$ in $\mF$ such that $V_{1,\xi}=V_{2,\xi}$ initially. Suppose the first does not break, while the second does. The norm $\|V_{1,\xi}-V_{2,\xi}\|_1$ would initially be zero and would jump upwards and hence become strictly positive after wave breaking. We avoid this by using the norms of $G_{12}$ instead. These are designed to drop after wave breaking in every situation, and thus they are shrinking as time moves forward.
		
	To ensure that $d$ is indeed a metric, we must confirm that the triangle inequality is satisfied for the $G_{12}$ terms.
	\begin{prop}
		The function $d: \mF^2\to\R$ given by \eqref{def:d} satisfies the triangle inequality.
	\end{prop}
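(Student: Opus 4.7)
The plan is to reduce the triangle inequality for the two remaining terms $\|G_{12}\|_1$ and $\|G_{12}\|_2$ in $d$ to the pointwise bound
\[
G_{13}(\xi,t) \leq G_{12}(\xi,t) + G_{23}(\xi,t) \quad \text{for all }\xi\in\R,\ t\geq 0,
\]
since Minkowski's inequality for the $L^1(\R)$ and $L^2(\R)$ norms then delivers $\|G_{13}\|_p \leq \|G_{12}\|_p + \|G_{23}\|_p$ for $p=1,2$, while the other six summands defining $d$ are standard norms and therefore already satisfy the triangle inequality.

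To establish the pointwise bound, I would first note the dichotomy built into the definition: for any pair $i,j$, the set $A_{ij}(t)$ consists of points where $U_{i,\xi},U_{j,\xi}\geq 0$, while $B_{ij}(t)$ consists of points where $t<\tau_i=\tau_j<\infty$, which by \eqref{def:tau} forces $U_{i,\xi}(\cdot,t),U_{j,\xi}(\cdot,t)<0$. Thus $A_{ij}$ and $B_{ij}$ are disjoint, and for each $\xi$ the sign pattern $(\epsilon_1,\epsilon_2,\epsilon_3)\in\{+,-\}^3$ of the $U_{i,\xi}(\xi,t)$ together with the wave breaking times $\tau_i(\xi)$ determines whether $\xi$ lies in each $\Omega_{ij}(t)$. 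A short argument, organized by the triples of indicators $(\mathds{1}_{\Omega_{12}},\mathds{1}_{\Omega_{23}},\mathds{1}_{\Omega_{13}})(\xi,t)$, shows that whenever two of the indicators equal $1$ the third does as well: agreement of the sign pattern is transitive, and common wave breaking times propagate from two pairs to the third. Consequently the configurations where exactly two of the three indicators are $1$ are vacuous, and only the five cases $(1,1,1)$ and those with at most one indicator equal to $1$ need to be checked.

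In case $(1,1,1)$ the bound is the standard $|V_{1,\xi}-V_{3,\xi}|\leq |V_{1,\xi}-V_{2,\xi}|+|V_{2,\xi}-V_{3,\xi}|$. In the remaining cases I combine the elementary inequalities, valid for $a,b,c\geq 0$:
\[
|a-c|\leq a\vee c \leq a+c,\qquad a\leq a\vee b,\qquad |b-c|\leq b\vee c,
\]
so that, for example, when only $\Omega_{12}$ contains $\xi$ one estimates $V_{1,\xi}\vee V_{3,\xi}$ by considering whether the maximum is attained at $V_{1,\xi}$ (bound it by $V_{2,\xi}+|V_{1,\xi}-V_{2,\xi}|\leq V_{2,\xi}\vee V_{3,\xi}+|V_{1,\xi}-V_{2,\xi}|$) or at $V_{3,\xi}$ (bound it by $V_{2,\xi}\vee V_{3,\xi}$); the other asymmetric cases are symmetric, and in the all-zeros case the crude bound $V_{1,\xi}\vee V_{3,\xi}\leq V_{1,\xi}+V_{3,\xi}\leq V_{1,\xi}\vee V_{2,\xi}+V_{2,\xi}\vee V_{3,\xi}$ suffices.

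The only genuinely delicate point is showing that the configurations $(1,1,0),(1,0,1),(0,1,1)$ are impossible; once this is seen, the remaining case work is a routine application of the elementary inequalities above, and the integration to $L^1$ and $L^2$ is immediate.
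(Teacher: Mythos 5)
Your proposal is correct and follows essentially the same route as the paper: reduce to the pointwise bound $G_{13}\leq G_{12}+G_{23}$, run a case analysis on which of the sets $\Omega_{12}$, $\Omega_{23}$, $\Omega_{13}$ contain $\xi$ (your observation that no two indicators can equal $1$ without the third is exactly the paper's dichotomy, just organized by indicator triples rather than by membership in $\Omega_{13}$ versus $\Omega_{13}^c$), apply the elementary inequalities $|a-b|\leq a\vee b$ and $a\vee b\leq a\vee c+|b-c|$, and finish with Minkowski in $L^1$ and $L^2$. The only minor imprecision is attributing $U_{i,\xi}(\xi,t)<0$ on $B_{ij}(t)$ solely to \eqref{def:tau}, which controls the sign at $t=0$; one also needs that $U_{i,\xi}(\xi,\cdot)$ stays negative up to the breaking time, a standard consequence of \eqref{eqn:LagSys} that the paper likewise uses implicitly.
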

	\begin{proof}
		The triangle inequality is immediate for all the norms in $d$ with the exception of the $L^1(\R)$ and $L^2(\R)$ norms of $G_{12}$. To ensure these satisfy the triangle inequality, we show that, for  all $X_1$, $X_2$, $X_3 \in \mF$, we have
		\[
			G_{13}(\xi, t) \leq G_{12}(\xi, t) + G_{23}(\xi, t).
		\]
		We introduce the following notation
		\[
			g_{12}(\xi, t) = |V_{1,\xi} - V_{2,\xi}|(\xi, t) \mathds{1}_{\Omega_{12}(t)}(\xi),
		\]
		\[
			\tilde{g}_{12}(\xi, t) = \big( V_{1,\xi} \vee V_{2,\xi} \big)(\xi, t) \mathds{1}_{\Omega_{12}^c(t)}(\xi),
		\]
		which yields
		\[
			G_{12}(\xi, t) = g_{12}(\xi, t) + \tilde{g}_{12}(\xi, t).
		\]
		We begin by noting the following:
		\begin{itemize}
		\item If $ \xi\in\Omega_{13}(t)$, then $ \xi \in \Omega_{12}(t) \cap \Omega_{23}(t) $ 
			or $ \xi \in \Omega_{12}^c(t) \cap \Omega_{23}^c(t) $, but not both.
		\item If $ \xi \in \Omega_{13}^c(t), $ then $\xi \in  \Omega_{12}^c(t) \cap \Omega_{23}^c(t)$, unless one of the following two cases occurs: 
		\begin{itemize}
		\item If $\xi \in A_{12}(t)$ and $ \xi \notin A_3(t)$, or $\xi \in B_{12}(t)$, then $\xi \in \Omega_{12}(t) \cap  \Omega_{23}^c(t)$.
		\item If $\xi \in A_{23}(t)$ and  $\xi \notin A_1(t)$, or $ \xi \in B_{23}(t)$, then  $ \xi \in \Omega_{12}^c(t) \cap  \Omega_{23}(t)$.
		\end{itemize}
		\end{itemize}
		Note the sets $\xi$ ends up in are all disjoint.
		
		Further, for $a,b,c \geq 0$, we have the following inequalities,
		\begin{subequations}
		\begin{align}\label{ineq:bas1}	
			|a- b| &\leq a \vee b,\\
			a \vee b & \leq a \vee c + |b-c|.
		\end{align}
		\end{subequations}
		We hence strategically use the required inequality for each of the cases above:
		\begin{itemize}
			\item If $ \xi\in\Omega_{13}(t)$,  then either  $ \xi \in \Omega_{12}(t) \cap \Omega_{23}(t) $, and 
			\begin{equation*}
			 	|V_{1,\xi} - V_{3, \xi}|(\xi ,t) \leq |V_{1,\xi} - V_{2, \xi}|(\xi ,t) + |V_{2,\xi} - V_{3, \xi}|(\xi ,t)
			\end{equation*}
			or $ \xi \in \Omega_{12}^c(t) \cap \Omega_{23}^c(t) $ and 
			\begin{equation*}
				|V_{1,\xi} - V_{3, \xi}|(\xi ,t) \leq (V_{1,\xi} \vee V_{2, \xi})(\xi ,t) + (V_{2,\xi} \vee V_{3, \xi})(\xi ,t), 
			\end{equation*}
			giving
			\begin{align*}
				g_{13}(\xi ,t) 
				&\leq \Big( 
				g_{12}(\xi ,t)\bbo_{\Omega_{23}(t)}(\xi ,t)
				+ g_{23}(\xi ,t)\bbo_{\Omega_{12}(t)}(\xi ,t)\\
				&\quad\ + \tilde{g}_{12}(\xi ,t)\bbo_{\Omega^c_{23}(t)}(\xi ,t) 
				+ \tilde{g}_{23}(\xi ,t) \bbo_{\Omega^c_{12}(t)}(\xi ,t)
				\Big)\bbo_{\Omega_{13}(t)}(\xi ,t).
			\end{align*}
			\item If $ \xi\in\Omega_{13}^c(t)$,  we either have  $\xi \in  \Omega_{12}^c(t) \cap \Omega_{23}^c(t)$ and 
			\begin{equation*}
				(V_{1,\xi} \vee V_{3,\xi})(\xi ,t)
				\leq (V_{1,\xi} \vee V_{2,\xi})(\xi ,t) + (V_{2,\xi} \vee V_{3,\xi})(\xi ,t),
			\end{equation*}
			$\xi \in \Omega_{12}(t) \cap  \Omega_{23}^c(t)$ and 
			\begin{equation*}
				(V_{1,\xi} \vee V_{3,\xi})(\xi ,t) 
				\leq |V_{1,\xi}-V_{2,\xi}|(\xi ,t)+(V_{2,\xi} \vee V_{3,\xi})(\xi ,t) , 
			\end{equation*} 
			or $ \xi \in \Omega_{12}^c(t) \cap  \Omega_{23}(t) $ and 
			\begin{equation*}
				(V_{1,\xi} \vee V_{3,\xi})(\xi ,t) 
				\leq (V_{1,\xi} \vee V_{2,\xi})(\xi ,t)+ |V_{2,\xi}-V_{3,\xi}|(\xi ,t),
			\end{equation*}
			giving 
			\begin{align*}
				\tilde{g}_{13}(\xi ,t) 
				&\leq \Big(
				\tilde{g}_{12}(\xi ,t)(\bbo_{\Omega_{23}(t)}(\xi ,t) + \bbo_{\Omega^c_{23}(t)}(\xi ,t))
				+ g_{12}(\xi ,t)\bbo_{\Omega^c_{23}(t)}(\xi ,t)\\
				&\quad\ +\tilde{g}_{23}(\xi ,t)(\bbo_{\Omega_{12}(t)}(\xi ,t) + \bbo_{\Omega^c_{12}(t)}(\xi ,t))
				+ g_{23}(\xi ,t)\bbo_{\Omega^c_{12}(t)}(\xi ,t) 
				\Big)\bbo_{\Omega^c_{13}(t)}(\xi ,t).
			\end{align*}
		\end{itemize}
		As each part of these sums lie on disjoint sets, we indeed have
		\[
			G_{13}(\xi, t) \leq G_{12}(\xi, t) + G_{23}(\xi, t), \quad\ \text{for all } (\xi, t) \in \R \times [0,+\infty).
		\]
		As all the involved functions are positive, one can apply both the $L^1(\R)$ and the $L^2(\R)$ norm on either side of the above inequality, and use the triangle inequality, to obtain the required result.
	\end{proof}
	We are now ready to establish stability.
	\begin{thm}\label{thm:est:d}
		Let $X_1(t)$ and $X_2(t)$ be the solutions of the system \eqref{eqn:LagSys} with initial data $X_{1}(0)$ and $X_{2}(0)$ in $\mF$, respectively. Then
		\[
			d(X_1(t),X_2(t)) \leq e^t d(X_{1}(0),X_{2}(0)).
		\]
	\end{thm}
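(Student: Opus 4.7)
The plan is to apply Grönwall's inequality to $s \mapsto d(X_1(s), X_2(s))$. Using the integral form $\|f(t)\|_p \le \|f(0)\|_p + \int_0^t \|f_t(s)\|_p\,ds$, valid for $p \in \{2,\infty\}$ by Minkowski's inequality, sidesteps any pointwise differentiability issues with the $L^\infty$ and $L^2$ norms. The goal is to bound each of the seven summands of $d$ at time $t$ by its initial value plus the integral of quantities already appearing in $d$.

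For the five summands not involving $G_{12}$, this follows directly from \eqref{eqn:LagSys}: the identities $(y_1-y_2)_t = U_1-U_2$, $(y_{1,\xi}-y_{2,\xi})_t = U_{1,\xi}-U_{2,\xi}$ and $H_t = 0$ handle three of them at once. For the $U$-differences, $(U_1-U_2)_t = \tfrac{1}{2}(V_1-V_2) - \tfrac{1}{4}\bigl(\lim_{\eta\to\infty}V_1(\eta,\cdot) - \lim_{\eta\to\infty}V_2(\eta,\cdot)\bigr)$ and $(U_{1,\xi}-U_{2,\xi})_t = \tfrac{1}{2}(V_{1,\xi}-V_{2,\xi})$. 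Since $V_i(\xi,t) = \int_{-\infty}^\xi V_{i,\xi}(\eta,t)\,d\eta$, both $\|V_1-V_2\|_\infty$ and $|\lim_{\eta\to\infty}V_1(\eta,\cdot) - \lim_{\eta\to\infty}V_2(\eta,\cdot)|$ are dominated by $\|V_{1,\xi}-V_{2,\xi}\|_1$. Finally, the pointwise bound $|V_{1,\xi}-V_{2,\xi}| \le G_{12}$ — immediate on $\Omega_{12}(t)$ by definition, and valid on $\Omega_{12}^c(t)$ because $V_{i,\xi}\ge 0$ forces $|V_{1,\xi}-V_{2,\xi}| \le V_{1,\xi}\vee V_{2,\xi}$ — yields $\|V_{1,\xi}-V_{2,\xi}\|_p \le \|G_{12}\|_p$ for $p \in \{1,2\}$, closing the $U$-estimates.

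The essential and only subtle step, where I expect the main effort to lie, is showing that $\|G_{12}(\cdot,t)\|_p$ is non-increasing in $t$ for $p \in \{1,2\}$. The plan is to establish the stronger pointwise claim that $t \mapsto G_{12}(\xi,t)$ is non-increasing for every $\xi \in \R$. By \eqref{eqn:LagSys4}, $V_{i,\xi}(\xi,\cdot)$ is constant except possibly for a single downward jump at $t=\tau_i(\xi)>0$ from $V_{i,\xi}(\xi,0)$ to $(1-\alpha)V_{i,\xi}(\xi,0)$. Since $U_{i,\xi t} = \tfrac{1}{2}V_{i,\xi}\ge 0$, the set $A_i(t)$ (and hence $\Omega_{12}(t)$) is non-decreasing in $t$, while $B_{12}(t)$ is non-increasing; moreover a direct computation using the constraint $y_{i,\xi}V_{i,\xi} = U_{i,\xi}^2$ gives $U_{i,\xi}(\xi,\tau_i(\xi))=0$, so a point leaving $B_{12}$ at $\tau_1(\xi)=\tau_2(\xi)$ enters $A_{12}$ at the very same instant. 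Consequently the only $t$-dependence of $G_{12}(\xi,\cdot)$ is concentrated at $t \in \{\tau_1(\xi),\tau_2(\xi)\}$, and a finite case analysis — analogous to the one used for the triangle inequality — verifies the non-increase at each transition via the elementary pointwise inequalities $|a-b|\le a\vee b$, $|a-(1-\alpha)b|\le a\vee b$, and $(1-\alpha)|a-b|\le (1-\alpha)a\vee b$ for $a,b\ge 0$ and $\alpha\in[0,1]$.

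Summing the seven estimates, and using that $\|H_1-H_2\|_\infty$ is constant in $t$ while $\|G_{12}\|_p$ is non-increasing in $t$, produces $d(X_1(t),X_2(t)) \le d(X_1(0),X_2(0)) + \int_0^t d(X_1(s),X_2(s))\,ds$; Grönwall's inequality then gives the stated bound $d(X_1(t),X_2(t)) \le e^t d(X_1(0),X_2(0))$.
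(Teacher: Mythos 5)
Your proposal is correct and follows essentially the same route as the paper's proof: term-by-term integral estimates reducing the $V_\xi$-differences to $\|G_{12}\|_1$ and $\|G_{12}\|_2$, the pointwise monotonicity of $t\mapsto G_{12}(\xi,t)$ (via the monotonicity of $A_{12}$, the collapse of $B_{12}$ into $A_{12}$ at the common breaking time, and the elementary inequalities $|a-b|\le a\vee b$), and then Gr\"onwall. The only cosmetic difference is that you bound $\tfrac12\|V_1-V_2\|_\infty$ and $\tfrac14|V_{1,\infty}-V_{2,\infty}|$ separately (getting a factor $\tfrac34$ where the paper's combined integral representation gives $\tfrac14$), which is immaterial since any coefficient at most $1$ yields the stated $e^t$.
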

	\begin{proof}
		We derive inequalities for each of the terms in our metric.
		To do this, we focus first on the metric $D:\mF^2\to\R$, given by
		\begin{align}
			D(X_1, X_2) \label{def:D}
			&\coloneqq d(X_1, X_2) - \|G_{12}\|_1 - \|G_{12}\|_2\\ \nonumber
			&= \|y_1-y_2\|_\infty 
			+ \|U_1-U_2\|_\infty 
			+ \|y_{1,\xi}-y_{2,\xi}\|_2\\ \nonumber
			&\quad\ + \|U_{1,\xi}-U_{2,\xi}\|_2 
			+ \|H_1 - H_2\|_\infty.
		\end{align}
		We do not need an estimate for the norm involving $H$, as it is constant in time.
		Beginning with the $y$ terms, we have from \eqref{eqn:LagSys}
		\[
			|(y_{1} - y_{2})(\xi, t)| \leq |(y_{1} - y_{2})(\xi, 0)| + \int_0^t |(U_{1}-U_{2})(\xi, s)|\ ds,
		\]
		and hence
		\begin{equation}\label{ineq:thm:yEstInf}
			\|(y_1 - y_2)(\cdot, t)\|_\infty \leq \|(y_1 - y_2)(\cdot,0)\|_\infty + \int_0^t \|(U_1 - U_2)(\cdot, s)\|_\infty\ ds.
		\end{equation}
		We also have,
		\begin{equation}\label{ineq:thm:yEst2}
			\|(y_{1,\xi} - y_{2,\xi})(\cdot, t)\|_{2} \leq \|(y_{1, \xi} - y_{2,\xi})(\cdot, 0)\|_{2} + \int_0^t \|(U_{1,\xi}-U_{2,\xi})(\cdot, s)\|_{2}\ ds,
		\end{equation}
		which follows immediately from the Lagrangian ODE system \eqref{eqn:LagSys}, and Minkowski's integral inequality.
		
		Set $V_\infty(t) \coloneqq \displaystyle \lim_{\xi\to +\infty} V(\xi, t)$. Then we have for the $U$ terms,
		\begin{equation}\label{ineq:thm:UEst}
			(U_1 - U_2)(\xi, t) = (U_1 - U_2)(\xi, 0) + \int_0^t (U_{1,t} - U_{2,t})(\xi, s)\ ds,
		\end{equation}
		and for the integral on the RHS,
		\begin{align*}
			\int_0^t (U_{1,t} - U_{2,t})(\xi,s)\ ds
			&= \int_0^t \frac{1}{2}(V_1 - V_2)(\xi,s) - \frac{1}{4} (V_{1,\infty} - V_{2,\infty})(s)\ ds\\
			&= \int_0^t \frac{1}{4}(V_1 - V_2)(\xi,s) \\
			&\hphantom{= \int_0^t } + \frac{1}{4}(V_1 - V_2)(\xi,s) - \frac{1}{4} (V_{1,\infty} - V_{2,\infty})(s) \ ds\\
			&= \frac{1}{4} \int_0^t \Big[\int_{-\infty}^\xi (V_{1,\xi} - V_{2,\xi})(\eta,s)\ d\eta\\
			&\hphantom{= \frac{1}{4} \int_0^t \Big[} - \int_\xi^{+\infty} (V_{1,\xi} - V_{2,\xi})(\eta,s) \ d\eta\Big]\ ds.
		\end{align*}
		Substituting into \eqref{ineq:thm:UEst} and taking the absolute value, we have
		\begin{equation*}
			|U_1 - U_2|(\xi, t) \leq |U_1 - U_2|(\xi, 0) + \frac{1}{4} \int_0^t \int_\R |V_{1,\xi} - V_{2,\xi}|(\eta, s)\ d\eta\ ds.
		\end{equation*}
		Concentrating on the integral on the RHS, we obtain
		\begin{align*}
			\int_\R |V_{1,\xi}-V_{2,\xi}|(\eta, s)\ d\eta 
			&\leq \int_{\Omega_{12}(s)} |V_{1,\xi} - V_{2,\xi}|(\eta, s)\ d\eta\ + \int_{\Omega_{12}^c(s)} ( V_{1,\xi} \vee V_{2,\xi} ) (\eta, s)\ d\eta\\
			&= \int_\R G_{12}(\eta ,s)\ ds.
		\end{align*}
		Thus, after taking the $L^\infty(\R)$ norm, we end up with
		\begin{equation}\label{ineq:thm:UEstInf}
			\|(U_1 - U_2)(\cdot, t)\|_\infty 
			\leq \|(U_1 - U_2)(\cdot, 0)\|_\infty
			+\frac{1}{4} \int_0^t \|G_{12}(\cdot, s)\|_1\ ds.
		\end{equation}
		For the $L^2(\R)$ norm involving the $U_\xi$'s, we use Minkowski's integral inequality, giving
		\begin{align*}
			\|(U_{1,\xi} - U_{2,\xi})(\cdot,t)\|_{2}
			\leq\|(U_{1,\xi} - U_{2,\xi})(\cdot,0)\|_{2} + \hlf \int_0^t \|(V_{1,\xi} - V_{2,\xi})(\cdot,s)\|_2\ ds.
		\end{align*}
		Using that we integrate on two disjoint sets and \eqref{ineq:bas1}, we have
		\begin{align*}
			\Big(\int_\R |V_{1,\xi} - V_{2,\xi}|^2(\xi, s)\ d\xi \Big)^{\frac{1}{2}} 
			&\leq  \bigg(\int_{\Omega_{12}(s)} |V_{1,\xi} - V_{2,\xi}|^2(\xi, s)\ d\xi\\ 
			&\hphantom{\leq  \Big(\int_{\Omega_{12}(s)}} 
			+ \int_{\Omega_{12}^c(s)} \big( V_{1,\xi} \vee V_{2,\xi} \big)^2(\xi, s)\ d\xi \bigg)^{\frac{1}{2}}\\
			&= \left(\int_\R |G_{12}(\xi ,s)|^2\ d\xi \right)^\hlf,
		\end{align*}
		and hence
		\begin{equation}\label{ineq:thm:UEst2}
			\|U_{1,\xi}(\cdot, t) - U_{2,\xi}(\cdot,t)\|_{2}
			\leq \|U_{1,\xi}(\cdot,0) - U_{2,\xi}(\cdot,0)\|_{2} 
			+ \hlf\int_0^t \|G_{12}(\cdot, s)\|_2\ ds. 
		\end{equation}
		Combining \eqref{ineq:thm:yEstInf}, \eqref{ineq:thm:yEst2}, \eqref{ineq:thm:UEstInf}, and  \eqref{ineq:thm:UEst2} together, yields
		\begin{equation}\label{ineq:D}
		\begin{split}
			D(X_1(t), X_2(t))
			&\leq D(X_1(0), X_2(0))\\
			&\quad + \int_0^t \bigg(D(X_1(s), X_2(s))+ \frac{1}{4} \| G_{12}(\cdot, s) \|_1
			+ \hlf\| G_{12}(\cdot, s) \|_2 \bigg) ds.
		\end{split}
		\end{equation}
		Thus, it remains to show that $G_{12}(\xi,t)$ is a decreasing function with respect to time.
		
		As, for all $\xi \in \R$, the $V_{\xi}(\xi,t)$ are decreasing functions in time, $(V_{1,\xi} \vee V_{2,\xi})(\xi,t)$ is a decreasing function in time. Should no wave breaking occur, then the difference $\vert V_{1,\xi}-V_{2,\xi}\vert (\xi,t)$ will remain unchanged. Should both break at the same time, then the difference will decrease, as after wave breaking
		\[
			|V_{1,\xi} - V_{2,\xi}|(\xi, t) = 	(1-\alpha)|V_{1,\xi} - V_{2,\xi}|(\xi, 0).
		\]
		Finally, one has to deal with the case of being in $\Omega^c_{12}(0)$ initially, then ending in $\Omega_{12}(t)$, as can happen if one has broken (or will never break) and the other one will break in the future. Define $a\wedge b \coloneqq \min\{a, b\}$. After breaking, one can write the difference as
		\[
			|V_{1,\xi} - V_{2,\xi}|(\xi, t) = \big( V_{1,\xi} \vee V_{2,\xi} \big)(\xi, t) - \big( V_{1,\xi} \wedge V_{2,\xi} \big)(\xi, t) \leq \big( V_{1,\xi} \vee V_{2,\xi} \big)(\xi, 0)
		\]
		due to the fact that, as mentioned previously, the maximum is a decreasing function of time, and the $V_\xi$'s are both positive. Thus one can conclude
		\begin{equation}\label{ineq:thm:GEst12}
			\|G_{12}(\cdot, t)\|_{1} \leq \|G_{12}(\cdot, 0)\|_{1} \quad \text{ and }\quad  \|G_{12}(\cdot, t)\|_{2} \leq \|G_{12}(\cdot, 0)\|_{2}.
		\end{equation}
		Combining this with inequality \eqref{ineq:D} and recalling \eqref{def:D}, one has
		\[
			d(X_1(t), X_2(t)) \leq d(X_1(0), X_2(0)) + \int_0^t d(X_1(s), X_2(s))\ ds
		\]
		and Gr\"{o}nwall's inequality gives the required result.
	\end{proof}
	
	This metric faces a major problem: Although two different members of an equivalence class in Lagrangian coordinates represent the same element in Eulerian coordinates, they may have a distance greater than zero. This is demonstrated in the following example.
	
	\begin{exmp}\label{exmp:distanceWrongEquivClasses}
		Consider the HS equation with initial data,
		\[
		u_0(x) = \begin{cases}
		1, &\mbox\ x\leq 0,\\
		1-x, &\mbox\ 0<x\leq1,\\
		0, &\mbox\ 1<x,
		\end{cases}
		\quad\
		\nu_0 = \mu_0 = u_{0,x}^2(x)\ dx.
		\]
		As our initial characteristic we can use $y_0(\xi) = \xi$, since neither energy concentrates on sets of measure zero nor $u_{0,x}(x)$ is unbounded. Furthermore, $U_0(\xi) =u_0(y_0(\xi))= u_0(\xi)$ by \eqref{eqn:InitialSmooth}. We then find, using \eqref{def:tau}, that wave breaking will only occur for $\xi\in (0,1)$ and, in particular, $\tau(\xi)=2$ for all $\xi \in (0,1)$. For $t<2$, i.e. before wave breaking occurs, the solution is given by \eqref{eqn:LagSys} and reads
		\[
			V(\xi,t) = 
			\begin{cases}
				0, &\mbox\ \xi\leq 0,\\
				\xi, &\mbox\ 0<\xi \leq 1,\\
				1, &\mbox\ 1<\xi,
			\end{cases}
			\quad\
			U(\xi, t) =
			\begin{cases}
				1-\frac{1}{4}t, &\mbox\ \xi\leq 0,\\
				1- \frac{1}{4}t+\frac{(t-2)}{2}\xi, &\mbox\ 0<\xi \leq 1,\\
				\frac{1}{4}t, &\mbox\ 1<\xi,
			\end{cases}
		\]
		and
		\[
		y(\xi, t) = 
		\begin{cases}
			t-\frac{1}{8}t^2+\xi, &\mbox\ \xi\leq 0,\\
			t-\frac{1}{8}t^2+\frac{(t-2)^2}{4}\xi, &\mbox\ 0<\xi \leq 1,\\
			 \frac{1}{8}t^2+\xi &\mbox\ 1< \xi  .
		\end{cases}
		\]
		Wave breaking does not occur at $t=0$, and thus $H(\xi,t) = V(\xi, t)$ for $t<2$. See Figure \ref{fig:ex:Characteristics} for a plot of $y(\xi, t)$

		\begin{figure}
			\centering
	      \includegraphics[scale=0.3]{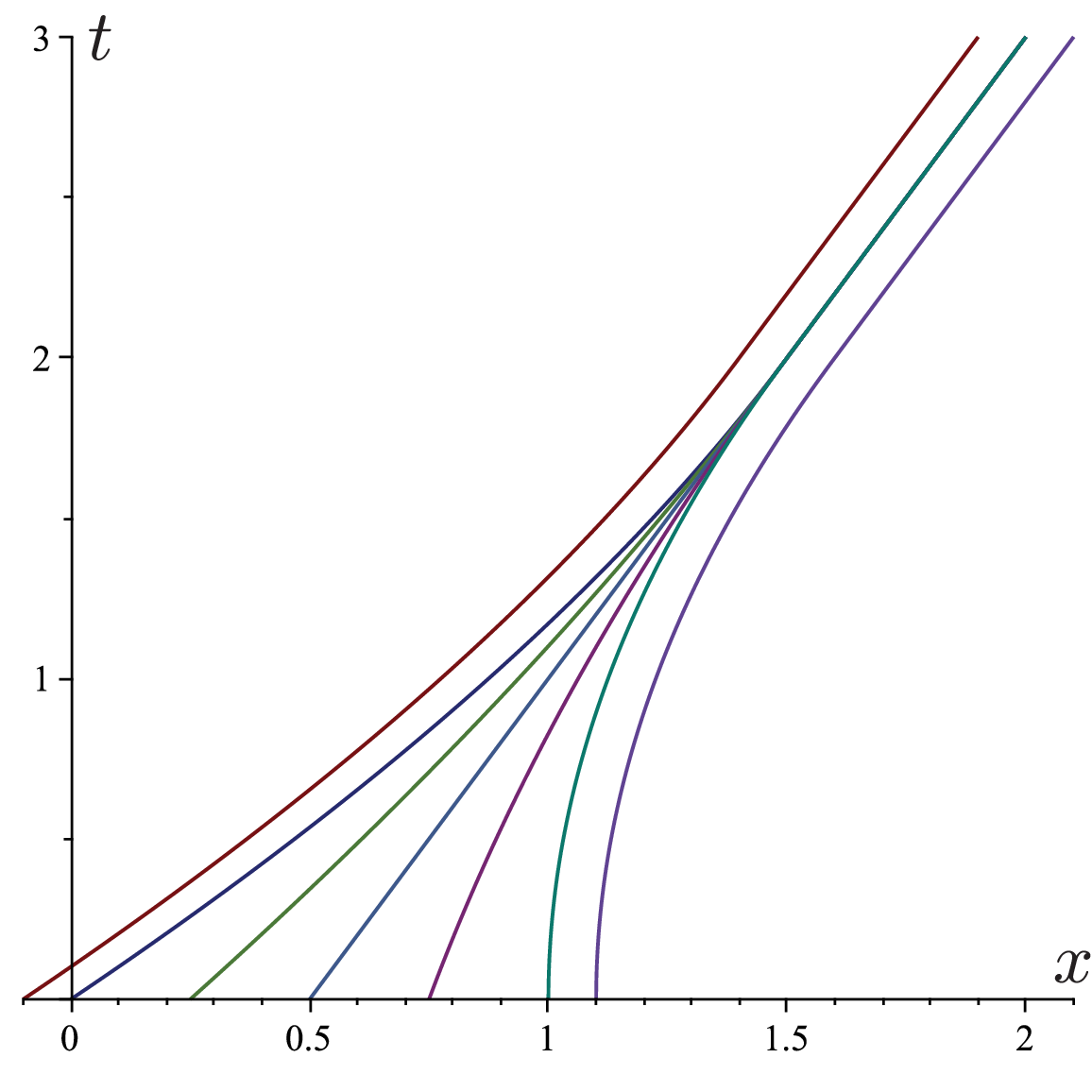}
			\caption{Characteristics $y(\xi,t)$ for Example \ref{exmp:distanceWrongEquivClasses}, for $t\in [0,3]$, in the dissipative case, i.e. $\alpha = 1$. Note how the characteristics for $\xi \in (0,1)$, meet in one point at $t=2$, and remain stuck together as all the concentrated energy is lost.}
			\label{fig:ex:Characteristics}
		\end{figure}
		
		On the other hand, we can define the initial data in Lagrangian coordinates using Definition~\ref{map:EultoLag}. This yields, using \eqref{eqn:LagSys}, for $t<2$
		\[
		\hat{V}(\xi,t) = 
		\begin{cases}
			0, &\mbox\ \xi\leq 0,\\
			\frac{1}{2}\xi, &\mbox\ 0<\xi \leq 2,\\
			1, &\mbox\ 2<\xi,
		\end{cases}
		\quad\
		\hat{U}(\xi, t) = 
		\begin{cases}
			1-\frac{1}{4}t, &\mbox\ \xi\leq 0,\\
			1-\frac{1}{4}t + \frac{(t-2)}{4}\xi, &\mbox\ 0<\xi \leq 2,\\
			\frac{1}{4}t, &\mbox\ 2<\xi,
		\end{cases}
		\]
		and
		\[
		\hat{y}(\xi, t) = 
		\begin{cases}
			t-\frac{1}{8}t^2+\xi , &\mbox\ \xi\leq 0,\\
			t-\frac{1}{8}t^2 + \frac{(t-2)^2}{8}\xi, &\mbox\ 0<\xi \leq 2,\\
			-1+\frac{1}{8}t^2+\xi , &\mbox\ 2<\xi.
		\end{cases}
		\]
		This time wave breaking occurs for all $\xi\in (0,2)$, and again $\tau(\xi)=2$ for all $\xi\in (0,2)$.
		Once again, $H(\xi, t) = V(\xi, t)$ for $t<2$. 
		
		We now wish to identify the relabelling function connecting our two solutions, which will then imply that these two solutions belong to the same equivalence class. Importantly, the distance between these two solutions is positive. Using Definition \ref{defn:equivRel} and Proposition~\ref{prop:solRel}, we see that we need to identify a homeomorphism $f$ satisfying \eqref{proprt:eqf} such that 
		\[
			(y,U,H,V)(\xi, t) =(\hat y, \hat U, \hat H,  \hat{V})(f(\xi), t).
		\]
		Since $\hat y(\xi,0)+\hat H(\xi,0)=\xi$, we see that $f\in G$ is given by
		\[
			f(\xi) = \begin{cases}
				\xi, &\mbox\ \xi \leq 0,\\
				2\xi, &\mbox\ 0 < \xi \leq 1,\\
				1+ \xi, &\mbox\ 1< \xi.
			\end{cases}
		\]
		
		For completions sake, we compute the solution using Definition~\ref{map:LagtoEul} and obtain in both cases that the solution for $t<2$ is given by
		\[
		u(x, t) =
		\begin{cases}
			1-\frac{1}{4}t, &\mbox\ x\leq t-\frac{1}{8}t^2,\\
			\frac{-4-t+4x}{2(t-2)}, &\mbox\ t-\frac{1}{8}t^2 <x \leq 1+\frac{1}{8}t^2 ,\\
			\frac{1}{4}t, &\mbox\ 1+\frac{1}{8}t^2 <x.
		\end{cases}
		\]

	\end{exmp}
	To resolve this issue, we introduce the function $J:\mF^2 \to \R$, given by
	\begin{equation}\label{def:J}
		J(X_1,X_2)=\inf_{f,g\in G} \big(d(X_1,X_2\bullet f) + d(X_1\bullet g, X_2) \big).
	\end{equation}
	This function satisfies the requirement that two elements of the same equivalence class have a distance of zero. Sadly, one cannot conclude that $J$ satisfies the triangle inequality. To resolve this issue, one constructs a metric by taking the infimum over finite sequences.
	\begin{defn}[A metric over equivalence classes in $\mF$]\label{def:dMF}
		Define the metric $d_\mF:\mF^2\to\R$ as follows
		\[
			d_\mF(X_A, X_B) \coloneqq  \inf_{\hat{\mF}(X_A, X_B) } \bigg\{ \sum_{n=1}^{N} J \big(X_n, X_{n-1} \big) \bigg\},
		\]
		where the infimum is taken over the set $\hat{\mF}(X_A, X_B) $ of finite sequences of arbitrary length $ \{ X_i\}_{i=0}^N $ in $\mF_0$, such that $X_0=\Pi X_A$ and $X_N=\Pi X_B$.
	\end{defn}

	The following lemma ensures that $d_\mF$ is indeed a metric.
	\begin{lem}\label{lem:distLowerBdd}
		Let $X_A, X_B \in \mF$ and set $(\hat{X}_A, \hat{X}_B )\coloneqq (\Pi X_A, \Pi X_B)$. We then have
		\begin{equation}\label{eqn:lem:0prop}
			\| \hat{X}_A - \hat{X}_B \|
			\leq \frac{5}{2}d_\mF(X_A, X_B) \leq 5 d(\hat{X}_A, \hat{X}_B),
		\end{equation}
		where 
		\begin{equation}\label{def:norm}
		\| {X}_A - {X}_B \| 
		\coloneqq \| y_A - y_B \|_\infty 
		+ \| {U}_A - {U}_B \|_\infty
		+ \| {H}_A - {H}_B \|_\infty
		+ \| V_A -  V_B\|_\infty.
	\end{equation}
	\end{lem}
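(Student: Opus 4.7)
The plan is to prove the two inequalities in \eqref{eqn:lem:0prop} separately. The upper inequality $\frac{5}{2} d_\mF(X_A, X_B) \leq 5 d(\hat X_A, \hat X_B)$ is immediate from the definitions: the length-one chain $\{\hat X_A, \hat X_B\} \subset \mF_0$ is admissible in the infimum defining $d_\mF$, and evaluating $J(\hat X_A, \hat X_B)$ at the choice $f = g = \text{id}$ gives $J(\hat X_A, \hat X_B) \leq 2 d(\hat X_A, \hat X_B)$, whence $d_\mF(X_A, X_B) \leq 2 d(\hat X_A, \hat X_B)$.

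For the lower inequality, applying the triangle inequality for the sup-norm $\|\cdot\|$ along any chain $\{X_i\}_{i=0}^N \subset \mF_0$ with $X_0 = \hat X_A$, $X_N = \hat X_B$ gives $\|\hat X_A - \hat X_B\| \leq \sum_{n=1}^N \|X_n - X_{n-1}\|$. Thus the task reduces to the key estimate $\|X_1 - X_2\| \leq \frac{5}{2} J(X_1, X_2)$ for arbitrary $X_1, X_2 \in \mF_0$; taking the infimum over chains then delivers the desired bound. Unpacking the definition of $J$, this reduces to showing, for all $f, g \in G$, that $\|X_1 - X_2\| \leq \frac{5}{2}\bigl(d(X_1, X_2 \bullet f) + d(X_1 \bullet g, X_2)\bigr)$.

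To establish this pointwise estimate I would exploit the Lipschitz structure automatic on $\mF_0$: from $y_i + H_i = \text{id}$ one has $y_{i,\xi}, H_{i,\xi} \in [0,1]$ a.e., so $y_i$ and $H_i$ are $1$-Lipschitz; the relation $V_{i,\xi} \leq H_{i,\xi}$ makes $V_i$ also $1$-Lipschitz; and $U_{i,\xi}^2 = y_{i,\xi} V_{i,\xi} \leq \tfrac{1}{4}$ makes $U_i$ be $\tfrac{1}{2}$-Lipschitz. From $(y_1 + H_1)(\xi) = \xi$ and $(y_2 + H_2)(f(\xi)) = f(\xi)$ we get the pointwise identity $f(\xi) - \xi = (y_2 \circ f - y_1)(\xi) + (H_2 \circ f - H_1)(\xi)$, whence $\|f - \text{id}\|_\infty \leq \|y_1 - y_2 \circ f\|_\infty + \|H_1 - H_2 \circ f\|_\infty \leq d(X_1, X_2 \bullet f)$. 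Inserting $X_2 \bullet f$ by triangle inequality and using these bounds gives $\|U_1 - U_2\|_\infty \leq \|U_1 - U_2 \circ f\|_\infty + \tfrac{1}{2}\|f - \text{id}\|_\infty \leq d(X_1, X_2 \bullet f)$; and after writing $V_1(\xi) - V_2(f(\xi)) = \int_{-\infty}^\xi (V_{1,\eta} - (V_2 \circ f)_\eta)\, d\eta$ and observing that $|V_{1,\eta} - (V_2 \circ f)_\eta| \leq G_{1, 2 \bullet f}$ pointwise, combining with the $1$-Lipschitz property of $V_2$ yields $\|V_1 - V_2\|_\infty \leq \|G_{1,2\bullet f}\|_1 + \|f - \text{id}\|_\infty \leq d(X_1, X_2 \bullet f)$. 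The delicate components are $y$ and $H$: a naive estimate gives $\|y_1 - y_2\|_\infty \leq 2 d(X_1, X_2 \bullet f)$ and likewise for $H$, which would produce an overall constant $\tfrac{15}{4}$ and miss the target. Instead, the symmetric estimates $\|y_1 - y_2\|_\infty \leq 2\|y_1 - y_2 \circ f\|_\infty + \|H_1 - H_2 \circ f\|_\infty$ and $\|H_1 - H_2\|_\infty \leq 2\|H_1 - H_2 \circ f\|_\infty + \|y_1 - y_2 \circ f\|_\infty$ sum to $\|y_1 - y_2\|_\infty + \|H_1 - H_2\|_\infty \leq 3 d(X_1, X_2 \bullet f)$. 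Adding the contributions gives $\|X_1 - X_2\| \leq 5 d(X_1, X_2 \bullet f)$; the analogous bound in $g$, combined with $\min(a,b) \leq \tfrac{1}{2}(a + b)$, produces the announced factor $\tfrac{5}{2}$. The main obstacle is precisely this tight constant, for which the symmetric-averaging treatment of the $y$ and $H$ components is indispensable.
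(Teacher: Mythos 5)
Your proposal is correct and follows essentially the same route as the paper: the trivial upper bound via the two-element chain with $f=g=\mathrm{id}$, the key identity $\|f-\mathrm{id}\|_\infty=\|y_B+H_B-(y_A+H_A)\circ f\|_\infty$ coming from $y+H=\mathrm{id}$ on $\mF_0$, the pointwise bound $|V_{1,\xi}-V_{2,\xi}|\leq G_{12}$ to pass from $\|\cdot\|$ to $d$, symmetrization over $f$ and $g$ to get $2\|X_1-X_2\|\leq 5J(X_1,X_2)$, and chaining over finite sequences. The only (harmless) difference is bookkeeping: the paper bounds $\|X_A-X_A\bullet f\|\leq 4\|\mathrm{id}-f\|_\infty$ using the uniform Lipschitz bound $1$ on all four components, whereas you distribute $\|f-\mathrm{id}\|_\infty$ component by component with the sharper constant $\tfrac12$ for $U$, which in fact yields a slightly better intermediate constant than the stated $5$.
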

	\begin{proof}
		The ideas of this proof follow the ones of \cite[Lemma 3.2]{MR3007728}. 
		As 
		\begin{equation*}
		d_\mF(X_A, X_B) = d_\mF(\Pi X_A, \Pi X_B), 
		\end{equation*}
		we assume for our calculations that $X_A$, $X_B \in \mF_0$.
		
		For the upper bound, consider the sequence containing just $X_A$ and $X_B$. Then
		\[
			d_\mF(X_A, X_B) \leq J(X_A, X_B)  
			= \inf_{f,g\in G} \big( d(X_A, X_B\bullet f) + d(X_A \bullet g, X_B) \big) 
			\leq 2d(X_A, X_B),
		\]
		where in the last inequality, we have chosen $f=g=\text{id}$.
		
		For the lower bound, we begin by showing that, for any $X_A, X_B \in \mF_0$,
		\[
			\|X_A - X_B\| \leq \frac{5}{2} J(X_A, X_B).
		\]
		First, for any $X\in \mF_0$, one has $X\in C^{0,1}(\R)^4$, as $Z=(y-\text{id},U,V,H) \in W^{1,\infty}(\R)^4$. Furthermore, $\|y_\xi\|_\infty$, $\|U_\xi\|_\infty$, $\|V_\xi\|_\infty$, and $\|H_\xi\|_\infty$ are all bounded from above by $1$, as $0\leq y_\xi, H_\xi\leq 1$, $0\leq V_\xi\leq H_\xi$, and $U_\xi^2 = y_\xi V_\xi \leq 1$ almost everywhere. Hence, we have
		\[
			|y(\xi_1) - y(\xi_2)| + |U(\xi_1) - U(\xi_2)| + |V(\xi_1) - V(\xi_2)| + |H(\xi_1) - H(\xi_2)| \leq 4 | \xi_1 - \xi_2 |,
		\]
		which implies, that for any $f\in G$,
		\begin{equation}\label{lem:ineq:XA-XB}
		\begin{split}
			\| X_A - X_B \| 
			&\leq \| X_A - X_A \bullet f \| + \| X_A \bullet f - X_B\|\\
			&\leq 4\| \text{id} - f \|_{\infty} + \| X_A \bullet f - X_B \|.
		\end{split}
		\end{equation}
		Then, using that $X_A \in \mF_0$, which implies $y_A + H_A = \text{id}$, and similarly for $X_B$, we get
		\[
			\| \text{id} - f \|_{\infty}  = \| y_B + H_B - (y_A + H_A) \circ f \|_\infty \leq \|X_A \bullet f - X_B \|.
		\]
		Substituting into \eqref{lem:ineq:XA-XB}, we thus end up with
		\begin{equation}\label{lem:ineq:XA-XB_2}
			\| X_A - X_B \| \leq 5 \| X_A \bullet f - X_B \|.
		\end{equation}
		Note that we have, for any $X_1$, $X_2 \in \mF$, that
		\[
			|V_1(\xi) - V_2(\xi)| 
			= \bigg| \int_{-\infty}^\xi \left( V_{1,\xi}(\eta) - V_{2,\xi}(\eta) \right)\ d\eta \bigg| 
			\leq \int_\R | V_{1,\xi}(\xi) - V_{2,\xi}(\xi) |\ d\xi 
			\leq \|G_{12}\|_1
		\]
		or equivalently
		\begin{equation}\label{lem:ineq:V1-V2}
			\|V_1 - V_2\|_\infty \leq \|G_{12}\|_1.
		\end{equation}
		Recalling \eqref{def:norm}, setting $V_1 = V_A \circ f$ and $V_2 = V_B$ in \eqref{lem:ineq:V1-V2}, and substituting into the RHS of \eqref{lem:ineq:XA-XB_2}, we get
		\begin{equation}\label{lem:ineq:XA-XB_3}
			\| X_A - X_B \| \leq 5d(X_A\bullet f, X_B).
		\end{equation}
		A similar process reveals, for any $g \in G$, that 
		\begin{equation}\label{lem:ineq:XA-XB_4}
			\| X_A - X_B \| \leq 5 \| X_A - X_B \bullet g \| \leq 5d(X_A, X_B\bullet g).
		\end{equation}
		Combining \eqref{lem:ineq:XA-XB_3} and \eqref{lem:ineq:XA-XB_4} together, and taking the infimum over all $f$, $g\in G$, we end up with
		\begin{equation}\label{lem:ineq:XA-XB_Fin}
			2\|X_A - X_B \| \leq 5J(X_A, X_B),
		\end{equation}
		as required.
		
		Consider $X_A$, $X_B \in \mF_0$. Given $\eps>0$, there exists a finite sequence $\{X_n\}_{n=0}^N$ in $\mF_0$ with $X_0=X_A$ and $X_N = X_B$, such that
		\[
			\sum_{n=1}^N J({X}_n, {X}_{n-1}) < d_\mF(X_A, X_B) + \eps.
		\]
		Using \eqref{lem:ineq:XA-XB_Fin}, we have 
		\[
			2\|X_A - X_B\| 
			\leq 2\sum_{n=1}^N \|X_n - X_{n-1}\| 
			\leq 5 \sum_{n=1}^N J({X}_n, {X}_{n-1})
			< 5 d_\mF(X_A, X_B) + 5\eps.
		\]
		Since the above inequality holds for any $\eps>0$, the claim follows. 
	\end{proof}
	
	The following lemma contains two estimates for $J$, which play en essential role when establishing the Lipschitz stablity in time for $d_\mF$. 
		
	\begin{lem}\label{thm:JHExpIneq}
		For $X_A$, $X_B\in \mF$, and $f\in G$ with $\|f_\xi\|^\hlf_\infty \leq C$ for some $C>1$, it holds that
		\[
			J(X_A\bullet f,X_B) \leq C J(X_A, X_B).
		\]
		As a consequence, for solutions $X_A(t), X_B(t) \in \mF$ of \eqref{eqn:LagSys} with initial data $X_A(0), X_B(0) \in \mF_0$, it holds that
		\[
			J(\Pi X_A(t), \Pi X_B(t)) \leq e^{\frac{1}{2}t} J(X_A(t), X_B(t)).
		\]
	\end{lem}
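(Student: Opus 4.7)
The plan is in two stages. For the first inequality, I would exploit the infimum structure of $J$: given $\eps>0$, take near-optimisers $\tilde f,\tilde g\in G$ with $d(X_A,X_B\bullet\tilde f)+d(X_A\bullet\tilde g,X_B)<J(X_A,X_B)+\eps$, and feed the shifted competitors $h=\tilde f\circ f$, $k=f^{-1}\circ\tilde g$ into the infimum defining $J(X_A\bullet f,X_B)$. Then $X_A\bullet f\bullet k=X_A\bullet\tilde g$ exactly, and $X_B\bullet h=(X_B\bullet\tilde f)\bullet f$, so the whole bound reduces to the auxiliary claim
\begin{equation*}
d(X_1\bullet f,X_2\bullet f)\leq \max(1,\|f_\xi\|_\infty^{\hlf})\,d(X_1,X_2),
\end{equation*}
applied with $X_2=X_B\bullet\tilde f$. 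Since $\|f_\xi\|_\infty^{\hlf}\leq C$, this closes the first assertion after sending $\eps\to 0$.

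The auxiliary estimate I would prove term by term, inspecting the definition \eqref{def:d}. The three $L^\infty$ differences are invariant because $f$ is a bijection; the $L^2$ differences for $y_\xi$ and $U_\xi$ acquire a chain-rule factor $f_\xi$, and the substitution $\eta=f(\xi)$ bounds the resulting integral by $\|f_\xi\|_\infty$ times the original squared $L^2$ norm. The delicate term is $G_{12}$. Because $f$ is orientation-preserving, $f_\xi\geq 0$ a.e., so the sign of $(U_i\circ f)_\xi=f_\xi\cdot U_{i,\xi}\circ f$ matches the sign of $U_{i,\xi}\circ f$, and the wave-breaking time $-2y_\xi/U_\xi$ is unaffected by multiplying numerator and denominator by the common factor $f_\xi$. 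Hence $\tilde A_i(t)=f^{-1}(A_i(t))$ and $\tilde B_{ij}(t)=f^{-1}(B_{ij}(t))$, so $\tilde\Omega_{ij}(t)=f^{-1}(\Omega_{ij}(t))$, and together with $\tilde V_{i,\xi}=f_\xi\cdot V_{i,\xi}\circ f$ one obtains $\tilde G_{12}(\xi,t)=f_\xi(\xi)\,G_{12}(f(\xi),t)$. A final change of variables then yields $\|\tilde G_{12}\|_1=\|G_{12}\|_1$ and $\|\tilde G_{12}\|_2\leq\|f_\xi\|_\infty^{\hlf}\|G_{12}\|_2$, exactly as for the $L^2$ terms above.

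For the second assertion, set $\phi_A(\cdot,t)=y_A(\cdot,t)+H_A(\cdot,t)$, so that $\Pi X_A(t)=X_A(t)\bullet\phi_A^{-1}(\cdot,t)$ and analogously for $B$. Applying the first part once with the relabelling $\phi_A^{-1}$ on $X_A$ and then, using the symmetry of $J$, once more with $\phi_B^{-1}$ on $X_B$ produces
\begin{equation*}
J(\Pi X_A(t),\Pi X_B(t))\leq \|(\phi_A^{-1})_\xi(\cdot,t)\|_\infty^{\hlf}\,\|(\phi_B^{-1})_\xi(\cdot,t)\|_\infty^{\hlf}\,J(X_A(t),X_B(t)).
\end{equation*}
It remains to bound each factor by $e^{t/4}$, equivalently $\inf_\xi(y_\xi+H_\xi)(\cdot,t)\geq e^{-t/2}$ along any solution starting in $\mF_0$. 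Writing $w=y_\xi+H_\xi$, the initial condition $X(0)\in\mF_0$ gives $w(\cdot,0)\equiv 1$, and \eqref{eqn:LagSys} gives $w_t=U_\xi$. The $\mF$-constraints $U_\xi^2=y_\xi V_\xi$ and $0\leq V_\xi\leq H_\xi$ combine via AM-GM into $U_\xi^2\leq y_\xi H_\xi\leq w^2/4$, whence $w_t\geq -w/2$; Gr\"onwall then yields $w(\cdot,t)\geq e^{-t/2}$, completing the argument. The main obstacle throughout is the covariance analysis for $G_{12}$: without the identity $\tilde\Omega_{ij}(t)=f^{-1}(\Omega_{ij}(t))$ the two $G_{12}$ norms would not rescale by $1$ and $\|f_\xi\|_\infty^{\hlf}$ respectively, and the two-step reduction used for the second assertion would collapse.
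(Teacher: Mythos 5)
Your proposal is correct and follows essentially the same route as the paper's proof: the same relabelling-covariance estimate for $d$ (treating the $L^\infty$ terms as invariant, picking up $\|f_\xi\|_\infty^{1/2}$ on the $L^2$ terms via the substitution $\eta=f(\xi)$, and checking that $\Omega_{12}$ transforms as $f^{-1}(\Omega_{12})$ so that $\|G_{12}\|_1$ is preserved and $\|G_{12}\|_2$ scales by $\|f_\xi\|_\infty^{1/2}$), followed by two applications with $f=(y+H)^{-1}$ and the same Gr\"onwall bound $y_\xi+H_\xi\geq e^{-t/2}$ coming from $U_\xi^2=y_\xi V_\xi\leq y_\xi H_\xi$. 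The only cosmetic differences are that you phrase the key estimate as $d(X_1\bullet f,X_2\bullet f)\leq\max(1,\|f_\xi\|_\infty^{1/2})\,d(X_1,X_2)$ and plug in explicit near-optimisers, where the paper writes the equivalent bound $d(X_A\bullet f,X_B\bullet h)\leq\|f_\xi\|_\infty^{1/2}d(X_A,X_B\bullet w)$ with $w=h\circ f^{-1}$ and takes the infimum directly.
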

	
	\begin{proof} The proof follows the ideas of the one for \cite[Lemma 4.8]{MR3573580}.
		First, note for $f$, $h\in G$, and $g_A$, $g_B \in L^\infty(\R)$,
		\begin{equation}\label{lem:eqn:infNrmInv}
			\|g_A\circ f - g_B \circ h \|_\infty = \|g_A - g_B\circ h \circ f^{-1}\|_\infty.
		\end{equation}
		Importantly, due to the group properties, $w \coloneqq h\circ f^{-1}$ is in $G$. We use this relation for the $L^\infty(\R)$ terms involving $y$, $U$, and $H$ in $d$. Hence we focus on the $L^2(\R)$ and $L^1(\R)$ terms.
		
		Beginning with $L^2(\R)$ terms, for $f$, $h\in G$, we have
		\begin{align*}
			\| (y_A\circ f)_\xi - (y_B\circ h)_\xi \|_2^2
			&= \int_\R |(y_A\circ f)_\xi - (y_B\circ h)_\xi |^2(\xi)\ d\xi\\
			&=  \int_\R |y_{A,\xi}\circ f f_\xi - y_{B,\xi}\circ h h_\xi|^2(\xi)\ d\xi.
		\end{align*}
		Using the substitution  $\eta = f(\xi)$, for which $d\xi = \frac{1}{f_\xi\circ f^{-1}(\eta)}d\eta$, we have
		\[
			\| (y_A\circ f)_\xi - (y_B\circ h)_\xi \|_2^2  
			= \int_\R | y_{A,\xi} (f_\xi\circ f^{-1}) - (y_{B,\xi}\circ h \circ f^{-1}) (h_\xi \circ f^{-1})|^2(\eta) \frac{1}{f_\xi\circ f^{-1}(\eta)}d\eta.
		\]
		Using that $w=h\circ f^{-1}\in G$ has the derivative $w_\eta(\eta) = \frac{h_\xi \circ f^{-1}(\eta)}{f_\xi\circ f^{-1}(\eta)}$, we get
		\begin{equation}\label{lem:ineq:y2Nrm}
			\begin{split}
				\| (y_A\circ f)_\xi - (y_B\circ h)_\xi \|_2^2
				&= \int_\R | (y_A)_\eta - (y_{B}\circ w)_\eta|^2(\eta) f_\xi\circ f^{-1}(\eta)\ d\eta\\
				& \leq \|f_\xi \|_\infty \| (y_A)_\eta - (y_B\circ w)_\eta \|_2^2 
			\end{split}
		\end{equation}
		Similarly, one has
		\begin{equation}\label{lem:ineq:U2Nrm}
			\| (U_A\circ f)_\xi - (U_B\circ h)_\xi \|_2^2 \leq \|f_\xi \|_\infty \| (U_A)_\eta - (U_B\circ w)_\eta \|_2^2.
		\end{equation}
		For the final two norms, we need to introduce some new notation to keep everything clear. Let $X_1$ be an element of $\mF$, and using a relabelling $f \in G$ define $X_2 = X_1 \circ f$. Then we have
		\begin{align*}
			A_2 
			&= \{ \xi \in \R \mid U_{2,\xi}(\xi) \geq 0\}\\
			&= \{ \xi \in \R \mid U_{1,\xi}(f(\xi))f_\xi(\xi) \geq 0\}\\
			&= \{ \xi \in \R \mid U_{1,\xi}(f(\xi)) \geq 0\}\\
			&= \{ f^{-1}(\xi) \in \R \mid U_{1,\xi}(\xi) \geq 0\} = f^{-1}(A_1).
		\end{align*}
		Using this, we define $\Omega$ for a relabelled solution. Given $X_i, X_j \in \mF$ for some labels $i, j$, and their respective relabellings $ f, h \in G $, we define
		\[
			\Omega^{f,h}_{i,j} = (f^{-1}(A_i)\cap h^{-1}(A_j)) \cup \{ \xi\in\R \mid 0< \tau_i(f(\xi)) = \tau_j(h(\xi))<+\infty \}.
		\]
		From the same substitution as before, and using the definition of $G_{12}$,
		\begin{equation}\label{lem:eqn:G1nrm}
			\begin{split}
				&\| (V_{A}\circ f - V_{B}\ \circ h)_\xi \mathds{1}_{\Omega_{AB}^{f,h}} + ((V_{A}\circ f)_\xi \vee (V_{B}\circ h)_\xi) \mathds{1}_{\Omega_{AB}^{f,h,c}} \|_1\\
				&= \int_\R \big|(V_{A\xi}f_\xi\circ f^{-1} - (V_{B,\xi}\circ w) h_\xi \circ f^{-1}) \mathds{1}_{\Omega_{AB}^{\text{id},w}} \\
				&\hphantom{= \int_\R \big|(V}
				+ (V_{A\xi} f_\xi \circ f^{-1}) \vee \big((V_{B,\xi}\circ w) h_\xi \circ f^{-1} \big) \mathds{1}_{\Omega_{AB}^{\text{id},w,c}} \big| \frac{1}{|f_\xi\circ f^{-1}|} d\eta\\
				&= \int_\R \big|(V_{A\xi} - (V_{B}\circ w)_\xi) \mathds{1}_{\Omega_{AB}^{\text{id},w}} + V_{A\xi} \vee (V_{B}\circ w )_\xi \mathds{1}_{\Omega_{AB}^{\text{id},w,c}}\big|\ d\eta,
			\end{split}
		\end{equation}
		and similarly to before,
		\begin{equation}\label{lem:ineq:G2nrm}
			\begin{split}
			&\| (V_{A}\circ f - V_{B}\ \circ h)_\xi \mathds{1}_{\Omega_{AB}^{f,h}} + (V_{A,\xi} \vee (V_{B}\circ h)_\xi) \mathds{1}_{\Omega_{AB}^{f,h,c}} \|_2^2\\
			&\leq \|f_\xi\|_\infty \| (V_{A} - V_{B}\ \circ w)_\xi \mathds{1}_{\Omega_{AB}^{\text{id},w}} + (V_{A,\xi} \vee (V_{B}\circ w)_\xi) \mathds{1}_{\Omega_{AB}^{\text{id},w,c}} \|_2^2.
			\end{split}
		\end{equation}
		Combining \eqref{lem:eqn:infNrmInv}, \eqref{lem:ineq:y2Nrm}, \eqref{lem:ineq:U2Nrm}, \eqref{lem:eqn:G1nrm}, and \eqref{lem:ineq:G2nrm} together, we have for $f,h \in G$ and $w = h \circ f^{-1}$,
		\[
			d(X_A\bullet f, X_B \bullet h) \leq \|f_\xi\|^{\hlf}_\infty d(X_A, X_B \bullet w).
		\]
		For all these estimates, $f$ is involved in the $w$, so to ensure we can take the infimum, we assume that $\|f_\xi\|_\infty^\hlf \leq C$ for some $C>1$.
		\begin{align*}
			J(X_A\bullet f, X_B) 
			&= \inf_{f_1, f_2}(d(X_A\bullet f, X_B \bullet f_1) 
			+ d(X_A \bullet (f \circ f_2), X_B))\\
			&\leq \inf_{w_1, w_2}(C d(X_A, X_B \bullet w_1) 
			+ C d(X_A \bullet w_2, X_B))\\
			&= C\inf_{w_1, w_2}(d(X_A, X_B \bullet w_1) 
			+ d(X_A \bullet w_2, X_B)) 
			= C J(X_A, X_B),
		\end{align*}
		where we have used the fact that $w_1$ and $w_2$ above are still in the group $G$, and that given $f\in G$ for each $g\in G$, there are $h$, $l \in G$ such that $g=f\circ h=l\circ f$.
		
		Given $t$ and slightly abusing the notation, denote by $(y+H)^{-1}(\xi,t)$ the inverse of $(y+H)(\cdot, t)$. Recalling \eqref{proprt:eqf}, we have $(y+H)^{-1}(\cdot, t)\in G$. Furthermore, $(y_\xi + H_\xi)^{-1}(\xi, 0) = 1$ as $X(0)\in\mF_0$. Choose $\xi\in \R$ and drop it in the notation in the following calculation. We see that
		\begin{align*}
			\frac{d}{dt}\bigg[\frac{1}{y_\xi(t)+H_\xi(t)}\bigg] 
			= -\frac{U_\xi(t)}{(y_\xi(t)+H_\xi(t))^2} 
			&\leq \frac{1}{y_\xi(t)+H_\xi(t)} \frac{\sqrt{y_\xi(t) V_\xi(t)}}{y_\xi(t)+H_\xi(t)} \\
			&\leq \frac{1}{y_\xi(t)+H_\xi(t)} \frac{\hlf(y_\xi(t) + H_\xi(t))}{y_\xi(t)+H_\xi(t)} 
		\end{align*}
		so
		\[
			\frac{d}{dt}\bigg[\frac{1}{y_\xi(t)+H_\xi(t)}\bigg] 
			\leq \hlf\frac{1}{y_\xi(t)+H_\xi(t)},
		\]
		and hence
		\[
			\frac{1}{y_\xi(t)+H_\xi(t)} \leq e^{\hlf t}.
		\]
		Then, one has
		\[
			\big[(y+ H)^{-1}(\xi,t)\big]_\xi = \frac{1}{(y_\xi + H_\xi)(t,  (y + H)^{-1}(\xi,t))} \leq e^{\hlf t},
		\]
		
		and the result follows by using the relabeling function $f(\xi, t)=(y + H)^{-1}(\xi, t)$,
		\begin{align*}
			J\big(\Pi X_A(t), \Pi X_B(t)\big) 
			&=J\big((X_A \bullet (y_A + H_A)^{-1})(t), (X_B \bullet (y_B + H_B)^{-1})(t)\big)\\
			&\leq e^{\frac{1}{4}t} J(X_A(t), (X_B\bullet (y_B + H_B)^{-1})(t))\\
			&\leq e^{\hlf t}  J\big(X_A(t), X_B(t)\big).
		\end{align*}
	\end{proof}
	
	We can now obtain stability in Lagrangian coordinates.
	
	\begin{thm}\label{thm:LagStab}
		Let $X_A(t)$, $X_B(t)\in \mF$ be the solutions of the system \eqref{eqn:LagSys} with initial data $ X_{A}(0)$, $X_{B}(0) \in \mF_0$, respectively. Then
		\[
			d_\mF(X_A(t), X_B(t)) \leq e^{\frac{3}{2}t} d_\mF(X_{A}(0), X_{B}(0)).
		\]
	\end{thm}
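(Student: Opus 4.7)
The plan is to lift an almost-optimal sequence for $d_\mF$ at time zero up to time $t$ through the solution operator and then project it back into $\mF_0$, showing that each intermediate distance grows by at most the factor $e^{3t/2}$. Fix $\eps>0$ and pick a finite sequence $\{X_n(0)\}_{n=0}^N \subset \mF_0$ with $X_0(0)=X_A(0)$ and $X_N(0)=X_B(0)$ such that
\[
\sum_{n=1}^N J(X_n(0), X_{n-1}(0)) < d_\mF(X_A(0), X_B(0)) + \eps.
\]
Let $X_n(t) \coloneqq S_t(X_n(0))$, which lies in $\mF$ but no longer necessarily in $\mF_0$. The projected sequence $\{\Pi X_n(t)\}_{n=0}^N$ is in $\mF_0$ with endpoints $\Pi X_A(t)$ and $\Pi X_B(t)$, so
\[
d_\mF(X_A(t), X_B(t)) = d_\mF(\Pi X_A(t), \Pi X_B(t)) \leq \sum_{n=1}^N J(\Pi X_n(t), \Pi X_{n-1}(t)).
\]

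The key step is to estimate each summand by $e^{3t/2}$ times the corresponding summand at time zero. I would first show $J(X_n(t), X_{n-1}(t)) \leq e^t J(X_n(0), X_{n-1}(0))$. For any $f,g\in G$, Proposition~\ref{prop:solRel} gives $X_{n-1}(t)\bullet f = S_t(X_{n-1}(0)\bullet f)$ and $X_n(t)\bullet g = S_t(X_n(0)\bullet g)$, so Theorem~\ref{thm:est:d} yields
\[
d(X_n(t), X_{n-1}(t)\bullet f) + d(X_n(t)\bullet g, X_{n-1}(t)) \leq e^t\bigl(d(X_n(0), X_{n-1}(0)\bullet f) + d(X_n(0)\bullet g, X_{n-1}(0))\bigr).
\]
Taking the infimum over $f,g \in G$ gives the desired inequality for $J$.

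Combining this with Lemma~\ref{thm:JHExpIneq}, namely $J(\Pi X_n(t), \Pi X_{n-1}(t)) \leq e^{t/2} J(X_n(t), X_{n-1}(t))$, produces
\[
J(\Pi X_n(t), \Pi X_{n-1}(t)) \leq e^{3t/2} J(X_n(0), X_{n-1}(0)).
\]
Summing over $n$ and using the near-optimality of the initial sequence gives
\[
d_\mF(X_A(t), X_B(t)) \leq e^{3t/2}\bigl(d_\mF(X_A(0), X_B(0)) + \eps\bigr),
\]
and letting $\eps \to 0$ concludes the argument.

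The only delicate point is ensuring the interchange of $S_t$ with relabellings in the $J$-estimate, which is precisely the content of Proposition~\ref{prop:solRel} and is the reason the functional $J$ was designed as it was. The $e^{3t/2}$ rate then appears as the product of the $e^t$ growth from Theorem~\ref{thm:est:d} (stability of $d$ along the flow) and the $e^{t/2}$ price from Lemma~\ref{thm:JHExpIneq} (projecting evolved solutions back into $\mF_0$); no further analysis of wave breaking or of the sets $\Omega_{ij}(t)$ is needed here, since that work is already absorbed into the two cited results.
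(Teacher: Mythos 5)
Your argument is correct and follows essentially the same route as the paper: an almost-optimal sequence at time zero, the intermediate bound $J(X_n(t),X_{n-1}(t))\leq e^{t}J(X_n(0),X_{n-1}(0))$ via Proposition~\ref{prop:solRel} and Theorem~\ref{thm:est:d}, and the $e^{t/2}$ projection cost from Lemma~\ref{thm:JHExpIneq}. The only cosmetic difference is that the paper fixes near-optimal relabellings $f_{n-1},g_n$ at time zero and propagates those specific choices, whereas you derive the clean $J$-contraction by taking the infimum over all $f,g$; the two are equivalent.
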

	\begin{proof}
		Let $\eps>0$. There exists a finite sequence $ \{X_{n}(t)\}_{n=0}^N $ in $\mF$ of solutions to \eqref{eqn:LagSys}, whose initial data lies in $\mF_0$, and a sequence of relabelling functions $ \{f_n\}_{n=0}^{N-1}, \{g_n\}_{n=1}^N $ in $G$ such that
		\begin{align} \nonumber
		\sum_{n=1}^{N} \big(d(X_{n}(0), X_{n-1}(0)\bullet f_{n-1})  + & d(X_{n}(0)\bullet g_n,  X_{n-1}(0))\big) \\ 
		& \qquad < d_\mF(X_{A}(0), X_{B}(0)) + \eps.  \label{thm:ineq:dFeps}
		\end{align}
		From Definition~\ref{def:dMF} and Lemma \ref{thm:JHExpIneq}, it thus follows that
		\begin{align*}
			d_\mF(X_A(t), X_B(t)) 
			&\leq \sum_{n=1}^{N} J(\Pi X_n(t), \Pi X_{n-1}(t))\\
			& \leq e^{\hlf t}\sum_{n=1}^{N} J(X_n(t), X_{n-1}(t)).
		\end{align*}
		Hence, from \eqref{def:J}, Proposition \ref{prop:solRel}, and Theorem \ref{thm:est:d}, we have
		\begin{align*}
			d_\mF(X_A(t), X_B(t)) 
			&\leq e^{\hlf t} \sum_{n=1}^{N}
			\big( d(X_{n}(t), X_{n-1}(t) \bullet f_{n-1}) + d(X_{n}(t)\bullet g_n, X_{n-1}(t)) \big) \\
			&\leq e^{\frac{3}{2}t} \sum_{n=1}^{N}
			\big( d(X_{n}(0), X_{n-1}(0) \bullet f_{n-1}) + d(X_{n}(0)\bullet g_n, X_{n-1}(0)) \big)\\
			&< e^{\frac{3}{2}t} (d_\mF(X_{A}(0), X_{B}(0)) + \eps),
		\end{align*}
		where for the final inequality we have used \eqref{thm:ineq:dFeps}.
		As such a result can be constructed for $\eps$ arbitrarily small, we have 
		\[
			d_\mF(X_A(t), X_B(t)) \leq e^{\frac{3}{2}t}d_\mF(X_{A}(0), X_{B}(0)),
		\]
		as required.
	\end{proof}
	
	\section{Equivalence relation in Eulerian variables and Lipschitz stability}
	We define the metric $d_\mD:\mD^2\to\R$ on Eulerian coordinates as follows,
	\begin{equation}\label{def:dD}
		d_\mD(Y_1,Y_2) = d_\mF(L(Y_1), L(Y_2)),
	\end{equation}
	for $Y_i = (u_i, \mu_i, \nu_i)\in\mD$. An immediate consequence of Theorem \ref{thm:LagStab} is the following.
	\begin{cor}\label{cor:time}
		Let $Y_1(t),Y_2(t) \in \mD$ be the $\alpha$-dissipative solutions at time $t$ of the partial differential equation \eqref{eqn:HS}, with initial data $Y_1(0),Y_2(0) \in \mD$, then
		\[
				d_\mD(Y_1(t),Y_2(t)) \leq e^{\frac{3}{2}t} d_\mD(Y_1(0),Y_2(0)).
		\]
	\end{cor}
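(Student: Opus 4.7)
The plan is to reduce the claim directly to Theorem \ref{thm:LagStab} via the definition \eqref{def:dD}. First I would set $X_i(0) := L(Y_i(0))$; the mapping $L$ from Definition \ref{map:EultoLag} produces elements of $\mF_0$, since $y(\xi) + H(\xi) = y(\xi) + (\xi - y(\xi)) = \xi$. Thus the initial data are admissible for Theorem \ref{thm:LagStab}, which yields
\[
d_\mF(X_1(t), X_2(t)) \leq e^{\frac{3}{2} t}\, d_\mF(X_1(0), X_2(0)),
\]
where $X_i(t) := S_t(X_i(0))$ denotes the solution of \eqref{eqn:LagSys}.

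The one step that needs more than bookkeeping is the identification $L(Y_i(t)) \sim X_i(t)$ in $\mF$. This is exactly how $\alpha$-dissipative Eulerian solutions are constructed in \cite{MR3860266}: one lifts the initial data to Lagrangian coordinates via $L$, solves \eqref{eqn:LagSys}, and projects back via $M$, so that $Y_i(t) = M(S_t(L(Y_i(0))))$. Combining this with the fact that $M \circ L = \mathrm{id}_\mD$ and that any two Lagrangian representatives with common image under $M$ are related by a relabelling in $G$, we obtain $L(Y_i(t)) \sim S_t(L(Y_i(0)))$.

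Next I would observe that $d_\mF$ descends to equivalence classes: by construction $d_\mF(X_A, X_B) = d_\mF(\Pi X_A, \Pi X_B)$, and for any $f \in G$ a direct computation using $(y \circ f) + (H \circ f) = (y + H) \circ f$ gives $\Pi(X \bullet f) = \Pi X$. Therefore
\[
d_\mD(Y_1(t), Y_2(t)) = d_\mF(L(Y_1(t)), L(Y_2(t))) = d_\mF(X_1(t), X_2(t)),
\]
and the same identity at $t = 0$ gives $d_\mF(X_1(0), X_2(0)) = d_\mD(Y_1(0), Y_2(0))$. Feeding these into the Lagrangian estimate finishes the proof. There is no substantial obstacle; the whole argument is essentially unwrapping the definition \eqref{def:dD}, and the only care point will be recording that $L$ indeed lands in $\mF_0$ so that the hypothesis of Theorem \ref{thm:LagStab} is met, and that $d_\mF$ is invariant under the equivalence $\sim$.
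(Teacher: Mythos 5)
Your argument is correct and is precisely the unwrapping the paper intends: the paper states this corollary as an immediate consequence of Theorem \ref{thm:LagStab} via the definition \eqref{def:dD}, and your verification that $L$ lands in $\mF_0$, that $L(Y_i(t))\sim S_t(L(Y_i(0)))$, and that $d_\mF$ is invariant under relabelling supplies exactly the omitted bookkeeping. Nothing further is needed.
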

	As mentioned earlier, the variable $\nu$ was necessarily added to represent the past energy in the system. However, we do not supply the initial energy distribution $\nu$. The following example demonstrates that if we have two different past energy measures, our distance will be greater than zero, yet we have the same solution $(u,\mu)$ in Eulerian coordinates.
	\begin{exmp}\label{exmp:initDens}
		Consider the same $u_0$ as in Example \ref{exmp:distanceWrongEquivClasses}, but with different initial energy measures, namely
		\[
			\nu_0 = u_{0,x}^2(x) dx+\delta_2,
		\]
		and
		\[
			\mu_0 = u_{0,x}^2(x) dx + (1-\alpha)\delta_2.
		\]
		For $\alpha\not = 0$, this models the case where wave breaking takes place at $t=0$. That is, energy is initially concentrated at the point $x=2$, and an $\alpha$-part of it dissipates immediately giving rise to the difference between $\nu_0$ and $\mu_0$.
	
		Then, we have
		\[
			\nu_0((-\infty, x)) = 
			\begin{cases}
				0, &\mbox\ x \leq 0,\\
				x, &\mbox\ 0< x\leq 1,\\
				1, &\mbox\ 1 < x \leq 2,\\
				2, &\mbox\ 2 < x,
			\end{cases}
		\]
		and energy initially concentrates at $x=2$. Thus we must define our initial conditions using the mapping $L$ given by Definition \ref{map:EultoLag}. We then obtain
		\[
			y_0(\xi) = 
			\begin{cases}
				\xi, &\mbox\ \xi \leq 0,\\
				\hlf \xi, &\mbox\ 0< \xi \leq 2,\\
				-1 + \xi, &\mbox\ 2 < \xi \leq 3,\\
				2, &\mbox\ 3 < \xi \leq 4,\\
				-2 + \xi, &\mbox\ 4 < \xi,
			\end{cases}
			\quad
			U_0(\xi) =
			\begin{cases}
				1, &\mbox\ \xi\leq 0,\\
				1-\hlf \xi, &\mbox\ 0< \xi\leq 2,\\
				0, &\mbox\ 2 < \xi.
			\end{cases}
		\]
		and using $H_0(\xi) = \xi - y_0(\xi)$ and \eqref{eqn:LagSys3}, gives
		\[
		 H_0(\xi) =H(\xi,t)= 
			\begin{cases}
				0, &\mbox\ \xi \leq 0,\\
				\hlf \xi, &\mbox\ 0< \xi \leq 2,\\
				1, &\mbox\ 2 < \xi \leq 3,\\
				-2 + \xi, &\mbox\ 3 < \xi \leq 4,\\
				2, &\mbox\ 4 < \xi.
			\end{cases}
		\]
		Using formula $\eqref{def:tau}$, we find that wave breaking occurs twice. For $\xi \in (3,4)$, wave breaking occurs initially, i.e. $\tau(\xi) = 0 $ and for $\xi\in(0, 2)$ we have $\tau(\xi)=2$.
		Using \eqref{map:L} and \eqref{eqn:LagSys4}, we get, for $t<2$,
		\[
			V(\xi, t) = 
			\begin{cases}
				0, &\mbox\ \xi \leq 0,\\
				\frac{1}{2}\xi, &\mbox\ 0<\xi\leq 2,\\
				1, &\mbox\ 2<\xi\leq 3,\\
				-2 + 3\alpha + (1-\alpha)\xi, &\mbox\ 3<\xi\leq 4,\\
				2-\alpha, &\mbox\ 4<\xi.
			\end{cases}
		\]
		We then solve the Lagrangian ODE problem \eqref{eqn:LagSys} for $t\in [0,2)$, and find
		\[
			U(\xi, t) =
			\begin{cases}
				1-\frac{1}{4}(2-\alpha)t, &\mbox\ \xi\leq 0,\\
				1 - \frac{1}{4}(2-\alpha)t + \frac{1}{4}(t-2)\xi, &\mbox\ 0<\xi\leq 2,\\
				\frac{1}{4}\alpha t, &\mbox\ 2<\xi \leq 3,\\
				-\frac{1}{4} \left(6-7\alpha \right)t + \hlf(1-\alpha)t\xi, &\mbox\ 3<\xi\leq 4,\\
				\frac{1}{4}\left(2 - \alpha \right)t, &\mbox\ 4<\xi,
			\end{cases}
		\]
		and
		\[
			y(\xi,t)=
			\begin{cases}
				t -\frac{1}{8}(2-\alpha)t^2 + \xi, &\mbox\, \xi \leq 0,\\
				t - \frac{1}{8}(2-\alpha)t^2 + \frac{1}{8}(t-2)^2\xi, &\mbox\ 0 < \xi \leq 2,\\
				-1+\frac{1}{8}\alpha t^2 + \xi, &\mbox\ 2<\xi\leq 3,\\
				2 - \frac{1}{8}(6-7\alpha)t^2 + \frac{1}{4}(1-\alpha)t^2 \xi, &\mbox\ 3 \leq \xi<4,\\
				- 2 + \frac{1}{8}(2-\alpha)t^2 + \xi, &\mbox\ 4< \xi,
			\end{cases}
		\]
		see Figure \ref{fig2}.
		Note that, for any $t\in (0,2)$ and $\alpha\not=1$ the function $y(\cdot,t)$ is strictly increasing and hence invertible. 
		In particular, one has, slightly abusing the notation,
		\[
			y^{-1}(x, t) =
			\begin{cases}
				-t + \frac{1}{8}(2-\alpha)t^2 + x, &\mbox\ x \leq t-\frac{1}{8}(2-\alpha)t^2,\\
				\frac{-8t + (2-\alpha)t^2+ 8x}{(t-2)^2}, &\mbox\ t-\frac{1}{8}(2-\alpha)t^2 < x \leq 1 + \frac{1}{8}\alpha t^2,\\
				1 - \frac{1}{8}\alpha t^2 + x, &\mbox\ 1 + \frac{1}{8}\alpha t^2 < x \leq 2 + \frac{1}{8} \alpha t^2,\\
				\frac{-16 + (6 - 7\alpha)t^2 + 8x}{2(1-\alpha)t^2}, &\mbox\ 2 + \frac{1}{8} \alpha t^2 < x \leq 2 + \frac{1}{8}(2-\alpha)t^2,\\
				2 - \frac{1}{8}(2-\alpha)t^2 + x &\mbox\ 2 + \frac{1}{8}(2-\alpha)t^2 < x,
			\end{cases}
		\]
		and inserting this into $U(\xi, t)$ we obtain the solution for $t\in(0,2)$,
		\[
			u(x, t) =
			\begin{cases}
				1-\frac{1}{4}(2-\alpha)t, &\mbox\ x \leq t-\frac{1}{8}(2-\alpha)t^2,\\
				\frac{-4 -\alpha t+ 4x}{2(t-2)}, &\mbox\ t-\frac{1}{8}(2-\alpha)t^2 < x \leq 1 + \frac{1}{8}\alpha t^2,\\
				\frac{1}{4}\alpha t, &\mbox\ 1 + \frac{1}{8}\alpha t^2 < x \leq 2 + \frac{1}{8} \alpha t^2,\\
				\frac{2x-4}{t}, &\mbox\ 2 + \frac{1}{8} \alpha t^2 < x \leq 2 + \frac{1}{8}(2-\alpha)t^2,\\
				\frac{1}{4}(2-\alpha) t &\mbox\ 2 + \frac{1}{8}(2-\alpha)t^2 < x.
			\end{cases}
		\]
		The following calculations are for $\alpha \not = 1$. Using the mapping $M$, given by Definition \ref{map:LagtoEul}, we can calculate $\mu$ and $\nu$ for $t\in (0,2)$. For any Borel set $A$ of $\R$, we get
		\begin{align*}
			\mu(A,t) 
			&= \int_{y^{-1}(A, t)} V_\xi(\xi,t)\ d\xi  \\
			&= \int_{y^{-1}(A, t)} \hlf\mathds{1}_{(0,2]}(\xi)\ d\xi+  \int_{y^{-1}(A, t)} (1-\alpha)\mathds{1}_{(3,4]}(\xi)\ d\xi\\
			&= \int_{y^{-1}(A \cap (t-\frac{1}{8}(2-\alpha)t^2, 1 + \frac{1}{8}\alpha t^2], t)} \hlf\ d\xi \\
			& \qquad + \int_{y^{-1}(A \cap (2+\frac{1}{8}\alpha t^2, 2 + \frac{1}{8}(2-\alpha) t^2], t)} (1-\alpha) d\xi\\
			&= \hlf \int_A \mathds{1}_{(t-\frac{1}{8}(2-\alpha)t^2, 1 + \frac{1}{8}\alpha t^2]} (y^{-1}(x,t))_x\ dx \\
			 & \qquad +  (1-\alpha) \int_A \mathds{1}_{ (2+\frac{1}{8}\alpha t^2, 2 + \frac{1}{8}(2-\alpha) t^2]} (y^{-1}(x,t))_x\ dx\\
			&= \frac{4}{(t-2)^2}\int_A \mathds{1}_{(t-\frac{1}{8}(2-\alpha)t^2, 1 + \frac{1}{8}\alpha t^2]}(x)\ dx\\
			& \qquad + \frac{4}{t^2}\int_A \mathds{1}_{(2+\frac{1}{8}\alpha t^2, 2 + \frac{1}{8}(2-\alpha) t^2]}(x)\ dx\\
			&= \int_A u_x^2(x,t)\ dx,
		\end{align*}
		and for $\nu$, we find
		\begin{align*}
			\nu(A,t) 
			= \int_{y^{-1}(A, t)} H_\xi(\xi,t)\ d\xi  
			&= \int_{y^{-1}(A, t)} \left(\hlf\mathds{1}_{(0,2]}(\xi) + \mathds{1}_{(3, 4]}(\xi)\right)\ d\xi\\
			&= \int_A u_x^2(x,t)\ dx + \alpha \int_{y^{-1}(A \cap (2+\frac{1}{8}\alpha t^2, 2 + \frac{1}{8}(2-\alpha) t^2], t)} d\xi\\
			&= \mu(A, t) + 4\frac{\alpha}{(1-\alpha)t^2}\int_A \mathds{1}_{(2+\frac{1}{8}\alpha t^2, 2 + \frac{1}{8}(2-\alpha) t^2]}(x)\ dx.
		 \end{align*}
		 Similar calculations yield for $\alpha=1$ and any Borel set $A$ of $\R$,
		 \begin{align*}
		 \mu(A,t)&=\int_A u_x^2(x,t)\ dx,\\
		 \nu(A,t)&= \mu(A, t) + \delta_{\{2+\frac{t^2}{8}\}}(A).
		 \end{align*}
		 
		 We can now compare this example with $\alpha=1$ to Example \ref{exmp:distanceWrongEquivClasses}. Both choices of $\nu_0$ lead to the same solution $(u,\mu)$ in Eulerian coordinates. So, for the given initial data $(u_0, \mu_0)$, there is an equivalence class consisting of triplets $(u_0,\mu_0,\nu_0)$ leading to the same solution $(u,\mu)$. However, different choices of $\nu$ lead to quadruples in Lagrangian coordinates that cannot be identified using relabeling and hence their distance with respect to $d_\mD$, cf. \eqref{def:dD}, will be greater than zero.
		\begin{figure}[!tbp]
		  \centering
		  \subfloat[$\alpha = 1$]{\includegraphics[width=0.4\textwidth]{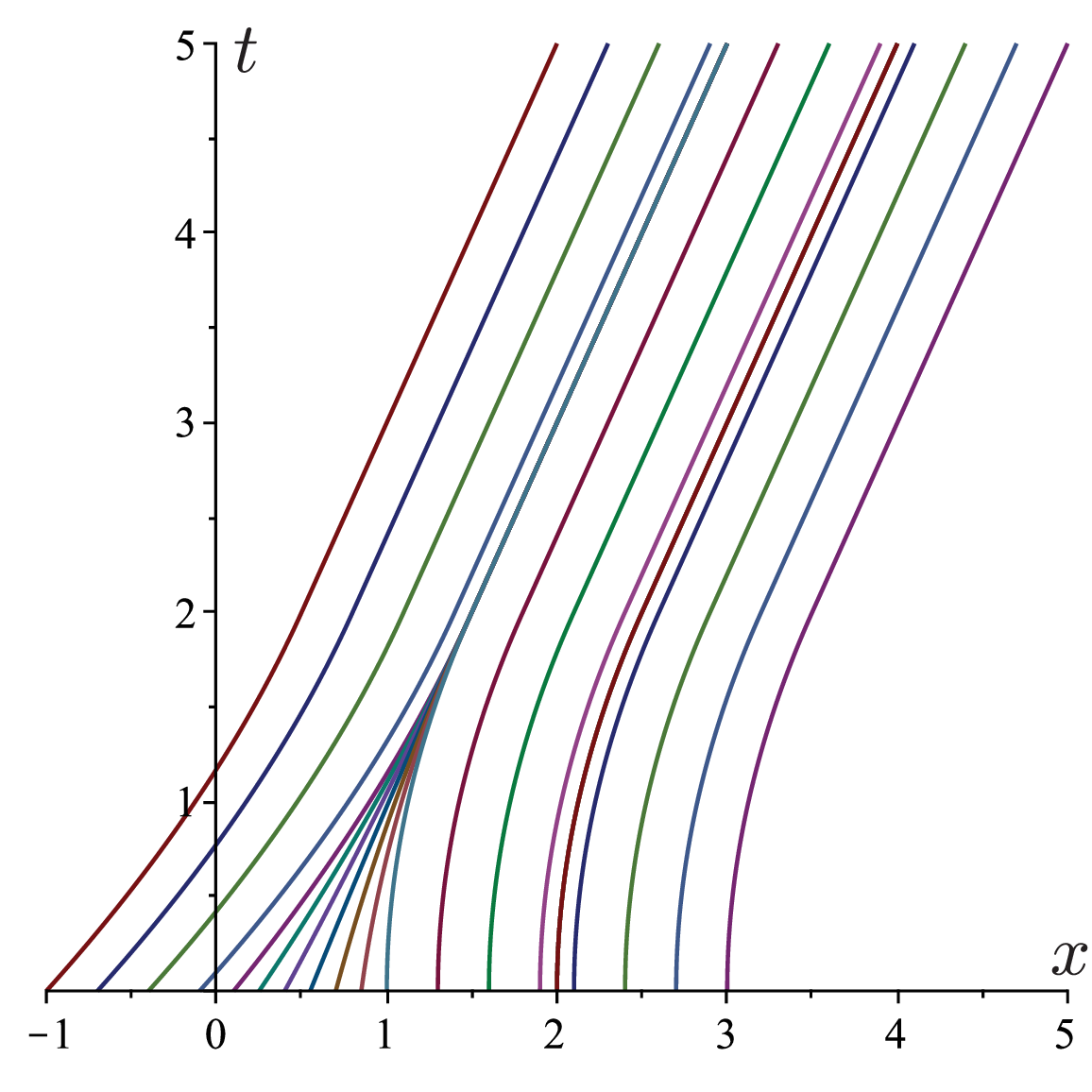}\label{fig:a1}}
		  \hfill
		  \subfloat[$\alpha=0.5$]{\includegraphics[width=0.4\textwidth]{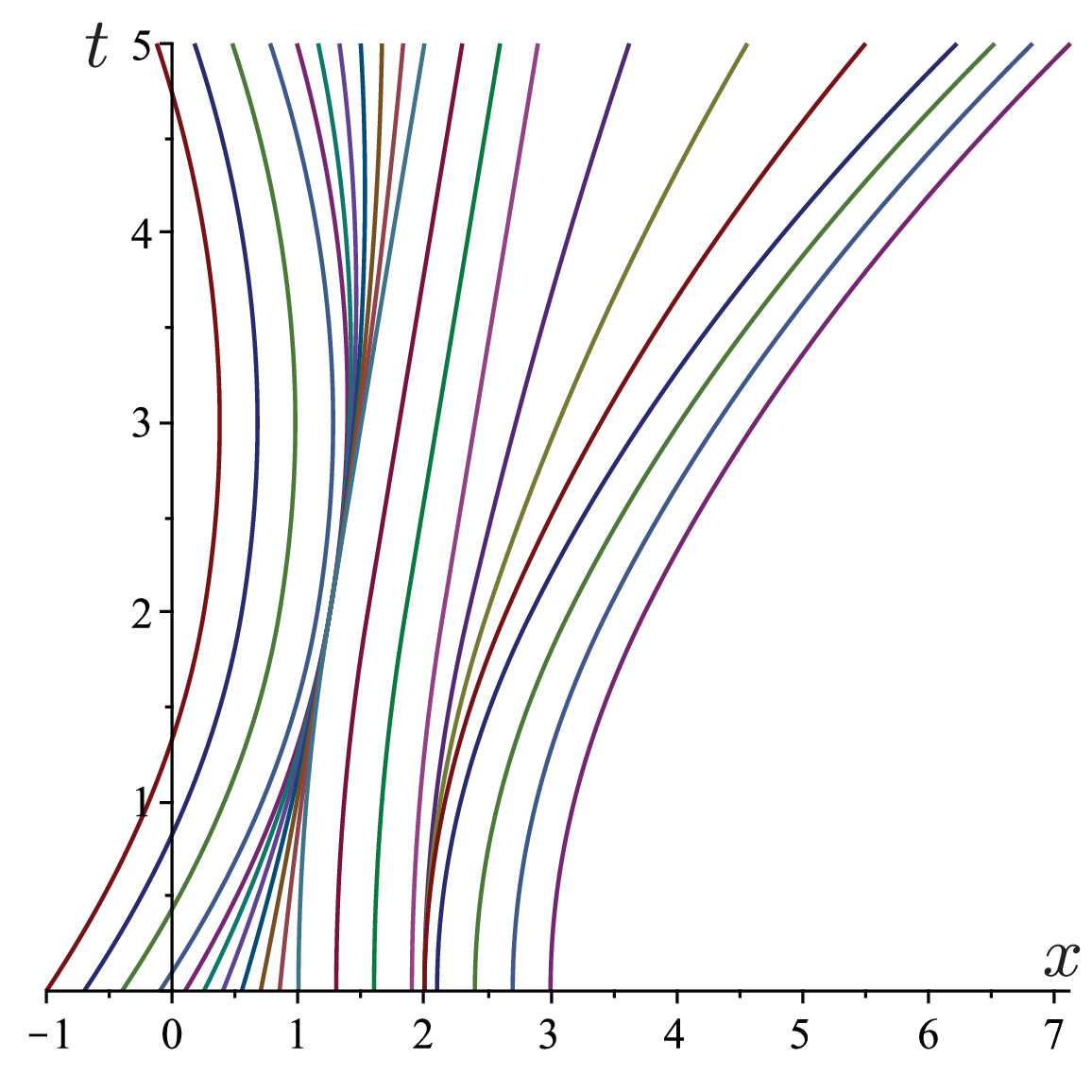}\label{fig:a0.5}}
		  \caption{Plots of the characteristics for the initial data in Example \ref{exmp:initDens}. Note the initial density causes characteristics to grow from a single point in the $\alpha = 0.5$ case, while in the $\alpha = 1$ case the loss of energy causes them to stick together.}
		  \label{fig2}
		\end{figure}
	\end{exmp}
	We do not know $\nu$, hence when going backwards in time our metric in Eulerian coordinates can only be defined using $u$ and $\mu$. We define the metric in a similar way to how we defined our $J$ in the previous section.
	We first define the set $\mD_{0,M}$, which is our original set $\mD$ without the $\nu$, with an additional assumption that our energy measure is bounded. This will be necessary to ensure that our construction satisfies the definition of a metric. Let
	\begin{equation}\label{eqn:def:D0M}
		\mD_{0,M} \coloneqq \big\{ (u, \mu) \in E_2 \times \mathcal{M}^+(\R) \mid  \mu_{ac} = u_x^2\ dx, \mu(\R) \leq M, \text{ and  }\mu = u_x^2\ dx \text{ if } \alpha=1 \big\}.
	\end{equation}
	Then, for $\hY=(u,\mu) \in \mD_{0,M}$, define the set $\mathcal{V}(\hY)$ to be the set of all $\nu\in \mathcal{M}^+(\R)$ satisfying
	\begin{itemize}
		\item $ \mu \leq \nu \in \mathcal{M}^+(\R) $,
		\item $ \mu\big((-\infty, x)\big) - \chi_+(x)\mu(\R) \in L^2(\R)$
		\item If $\alpha=1$, $ \nu_{ac} = \mu = u_x^2\ dx $,
		\item If $0\leq \alpha < 1$, $ \frac{d\mu}{d\nu}(x) \in \{1, 1-\alpha\}$, and $\frac{d\mu}{d\nu} = 1$ if $u_x(x)<0$.
	\end{itemize}
	Consider $(u,\mu)\in \mD_{0,M}$. We note the following inequality,	
	\begin{equation}\label{ineq:ux2M}
		\int_\R u_x^2(x)\ dx \leq \mu(\R) \leq M.
	\end{equation}
	
	Define the mapping $J_\mD:\mD^2_{0,M} \to \R$ as
	\begin{equation}\label{eqn:JmD}
		J_\mD(\hY_1, \hY_2) = \inf_{(\nu_1, \nu_2) \in \mathcal{V}(\hY_1)\times\mathcal{V}(\hY_2)} d_\mD((u_1, \mu_1, \nu_1), (u_2, \mu_2, \nu_2)).
	\end{equation}
	We encounter a similar problem as to our metric on the previous set of equivalence classes in $\mF$. We cannot conclude that the triangle inequality is satisfied for this distance. 
	
	Following a similar construction as before, we define the metric $d_M:\mD_{0,M}^2 \to \R$ by
	\begin{equation}\label{def:dM}
		d_M(\hY_A, \hY_B) \coloneqq \inf_{\hat{\mD}(Y_A, Y_B)} \sum_{n=1}^N J_\mD(\hY_n, \hY_{n-1}),
	\end{equation}
	where the infimum is taken over ${\hat{\mD}(\hY_1, \hY_2)}$, the set of all finite sequences $\{\hY_i\}_{i=1}^N$ in $\mD_{0,M}$ satisfying $\hY_0 = \hY_A$ and $\hY_N = \hY_B$. The following result ensures this is a metric.
	\begin{lem}
		The function $d_M:\mD_{0,M}^2 \to \R$ given by \eqref{def:dM} defines a metric on $\mD_{0,M}$.
	\end{lem}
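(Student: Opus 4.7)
The plan is to verify the four metric axioms in turn, leaving the separation axiom for last. Non-negativity, reflexivity, and symmetry are routine: $J_\mD \geq 0$ because $d_\mD \geq 0$; $d_M(\hat{Y},\hat{Y})=0$ by taking the one-term sequence $\hat{Y}_0=\hat{Y}_N=\hat{Y}$ together with $\nu_1=\nu_2$ in \eqref{eqn:JmD}; symmetry of $d_M$ follows from the symmetry of $d_\mD$ (and hence of $J_\mD$) by swapping $(\nu_1,\nu_2)$. The triangle inequality follows by concatenating two admissible sequences, exactly as in Definition \ref{def:dMF}. What remains, and is the only real content, is to show that $d_M(\hat{Y}_A, \hat{Y}_B) = 0$ forces $\hat{Y}_A = \hat{Y}_B$.

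To establish this separation, the strategy is to produce a lower bound $\Lambda(\hat{Y}_A,\hat{Y}_B) \leq K\, d_M(\hat{Y}_A,\hat{Y}_B)$ for some constant $K=K(M)$, where $\Lambda$ is a point-separating semimetric on $\mD_{0,M}$. A convenient choice is
\[
\Lambda(\hat{Y}_1,\hat{Y}_2) \coloneqq \|u_1-u_2\|_\infty + \sup_{x\in\R}\bigl|\mu_1((-\infty,x])-\mu_2((-\infty,x])\bigr|,
\]
since $u$ is continuous and since the cumulative function of a finite positive Radon measure determines the measure. By the telescoping argument used in Lemma \ref{lem:distLowerBdd}, it suffices to prove the one-step estimate $\Lambda(\hat{Y}_1,\hat{Y}_2) \leq K J_\mD(\hat{Y}_1,\hat{Y}_2)$ for any $\hat{Y}_1,\hat{Y}_2 \in \mD_{0,M}$; applying this inequality along any admissible sequence and taking the infimum yields the desired bound by $d_M$.

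For the one-step estimate I would fix arbitrary $\nu_i \in \mathcal{V}(\hat{Y}_i)$ and apply Lemma \ref{lem:distLowerBdd} to $L(u_i,\mu_i,\nu_i) \in \mF_0$ to get pointwise control of $\|y_1-y_2\|_\infty$, $\|U_1-U_2\|_\infty$, $\|H_1-H_2\|_\infty$, and $\|V_1-V_2\|_\infty$ by $\tfrac{5}{2}\,d_\mD$. The bound on $\|u_1-u_2\|_\infty$ then comes from the identity $U_i=u_i\circ y_i$ together with the uniform H\"older-$\tfrac{1}{2}$ estimate $|u_i(a)-u_i(b)|\leq \sqrt{M}\,|a-b|^{1/2}$, which holds throughout $\mD_{0,M}$ thanks to \eqref{ineq:ux2M}. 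The bound on the cumulative functions goes through the push-forward relation $\mu_i=(y_i)_\#(V_{i,\xi}\,d\xi)$: at a point $x$, evaluating $V_i$ at the right endpoint of $\{y_i = x\}$ yields $\mu_i((-\infty,x])$, so differences of cumulative functions are controlled by $\|V_1-V_2\|_\infty + \|y_1-y_2\|_\infty$ via the pointwise Lipschitz bounds $\|V_{i,\xi}\|_\infty,\,\|y_{i,\xi}\|_\infty\leq 1$ derived inside the proof of Lemma \ref{lem:distLowerBdd}. Taking the infimum over $\nu_1,\nu_2 \in \mathcal{V}(\hat{Y}_1)\times\mathcal{V}(\hat{Y}_2)$ delivers the $J_\mD$ bound.

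The hard part will be the bookkeeping in the cumulative estimate: where $y_i$ has a flat plateau (encoding a point mass of $\nu_i$) or a level gap (a no-mass region of $\nu_i$), the two pre-images of a given $x$ under $y_1$ and $y_2$ can lie far apart even when $\|y_1-y_2\|_\infty$ is small, so one has to choose the endpoints consistently and verify that the Lipschitz bounds on $V_{i,\xi}$ and $y_{i,\xi}$ indeed absorb the discrepancy from both sides. The assumption $\mu(\R)\leq M$ built into $\mD_{0,M}$ is indispensable at two places: it furnishes the uniform H\"older constant for $u$ via \eqref{ineq:ux2M}, and it guarantees that both cumulative functions appearing in $\Lambda$ are uniformly bounded so that the final constant $K$ depends only on $M$.
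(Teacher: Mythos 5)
Your treatment of symmetry, the triangle inequality, the backward implication, and the identification $u_A=u_B$ (via $U_i=u_i\circ y_i$, Lemma \ref{lem:distLowerBdd}, and the H\"older-$\tfrac12$ bound from \eqref{ineq:ux2M}) all match the paper's argument. The gap is in the measure component of your separating functional $\Lambda$. The uniform distance between cumulative functions, $\sup_x|\mu_1((-\infty,x])-\mu_2((-\infty,x])|$, is \emph{not} dominated by $J_\mD$ or $d_M$, so the one-step estimate $\Lambda\leq K\,J_\mD$ you need is false. Concretely, for $\alpha<1$ take $u_1=u_2=0$, $\mu_1=\nu_1=\delta_0$ and $\mu_2=\nu_2=\delta_\eps$. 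In Lagrangian coordinates the images under $L$ are translated ramps: $\|y_1-y_2\|_\infty=\|V_1-V_2\|_\infty=\|H_1-H_2\|_\infty=\eps$, $U_1=U_2=0$, and $G_{12}=|V_{1,\xi}-V_{2,\xi}|=\mathds{1}_{[0,\eps]\cup[1,1+\eps]}$, so $d_\mD\leq C\sqrt{\eps}\to0$; yet the cumulative functions differ by $1$ on $[0,\eps)$ for every $\eps>0$. The ``bookkeeping'' you defer at plateaus of $y_i$ is therefore not bookkeeping but a genuine obstruction: at an atom of $\mu_2$ the two preimages $\xi_1^+$ and $\xi_2^+$ differ by the length of the plateau, and $V_2(\xi_1^+)-V_2(\xi_2^+)$ picks up the full mass of the atom, which is not small. (A secondary structural issue: even for the $u$-part, the H\"older estimate produces $\sqrt{\|y_1-y_2\|_\infty}$, which is not linearly bounded by $J_\mD$ in a single step; one must telescope the linear Lagrangian quantities first and take the square root of the accumulated sum at the end, as the paper does.)

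The fix is to separate the measures in the weak-$*$ sense rather than in Kolmogorov distance. The paper shows $\int_\R f\,d\mu_A=\int_\R f\,d\mu_B$ for all $f\in C_c^\infty(\R)$ by writing both integrals in Lagrangian variables, splitting into $\int(f\circ y_A)(V_{A,\xi}-V_{B,\xi})\,d\xi$ and $\int[(f\circ y_A)-(f\circ y_B)]V_{B,\xi}\,d\xi$, integrating the first by parts to get the bound $\|f'\|_1\|V_A-V_B\|_\infty$, and bounding the second by $\|f'\|_\infty M\|y_A-y_B\|_\infty$. Both vanish as $\eps\to0$, and density of $C_c^\infty(\R)$ in $C_0(\R)$ together with Riesz representation then gives $\mu_A=\mu_B$. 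Testing against a function with bounded derivative is exactly what absorbs the atom discrepancy that defeats the CDF approach.
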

	\begin{proof}
		Symmetry is immediate, as the distance $d_M$, if you dig deep enough, is constructed of metrics.\\
		The triangle inequality is more challenging. Let $\hY_A$, $\hY_B$, $\hY_C\in \mD_{0, M}$. Choose $\eps >0$. Select two sequences
		\begin{itemize}
			\item $ \{\hY_i\}_{i=0}^{N_1}$ in $ \hat{\mD}(\hY_A, \hY_B) $, and 
			\item $ \{\hY_i\}_{i=N_1}^{N_2}$ in $ \hat{\mD}(\hY_B, \hY_C) $,
		\end{itemize}
		where $N_1, N_2\in\N$ and $N_1<N_2$, such that 
		\begin{itemize}
			\item $ \sum_{n=1}^{N_1}  J_\mD(\hY_n, \hY_{n-1}) \leq d_M(\hY_A, \hY_B) + \eps$, and
			\item $ \sum_{n=N_1+1}^{N_2}  J_\mD(\hY_n, \hY_{n-1}) \leq d_M(\hY_B, \hY_C) + \eps$.
		\end{itemize}
		Then
		\begin{align*}
			d_M(\hY_A, \hY_C) 
			\leq \sum_{n=1}^{N_2}  J_\mD(\hY_n, \hY_{n-1}) 
			&= \sum_{n=1}^{N_1}  J_\mD(\hY_n, \hY_{n-1}) 
			+ \sum_{n=N_1+1}^{N_2}  J_\mD(\hY_n, \hY_{n-1}) \\
			&\leq d_M(\hY_A, \hY_B) + d_M(\hY_B, \hY_C) + 2\eps.
		\end{align*}
		As one can make a similar construction for any $\eps >0$, the inequality involving the RHS and LHS is satisfied for any $\eps>0$, and hence
		\[
			d_M(\hY_A, \hY_C) \leq d_M(\hY_A, \hY_B) + d_M(\hY_B, \hY_C).
		\]
		It remains to show the zero condition, that is
		\[
			d_M(\hY_A, \hY_B) = 0 \quad \text{ if and only if } \quad \hY_A = \hY_B.
		\]
		First, set $\hY = \hY_A = \hY_B$, and let $\nu \in \mathcal{V}(\hY)$, we have 
		\[
			0 \leq d_M(\hY, \hY) \leq d_\mD((\hat{u},\hat{\mu},\nu), (\hat{u}, \hat{\mu}, \nu)) = 0.
		\]
		Thus we obtain the backward implication for this statement. The forward implication is more challenging.
		
		Suppose $ d_M(\hY_A, \hY_B) = 0 $. Let $\eps>0$, and select a sequence $\{Y_n\}_{n=0}^N$ in $\mD$ with $\mu_n(\R) \leq M$ for all $n$, $(u_0, \mu_0) = (u_A, \mu_A)$, and $(u_N, \mu_N) = (u_B, \mu_B)$, such that
		\[
			\sum_{n=1}^N d_\mD(Y_n, Y_{n-1}) 
			< d_M(\hY_A, \hY_B) + \frac{2}{5}\eps = \frac{2}{5}\eps.
		\]
		Such a sequence exists because of the definition of the infimum.
		
		Setting $X_n = L(Y_n)$, and using Lemma \ref{lem:distLowerBdd} together with \eqref{def:dD}, we have
		\begin{equation}\label{est:Xnorm}
			\sum_{n=1}^N \|X_n-X_{n-1}\| \leq\frac{5}{2} \sum_{n=1}^N d_\mD(Y_n, Y_{n-1}) < \eps.
		\end{equation}
		Immediately from the definition of the norm $\|\cdot\|$, given by \eqref{def:norm}, we have that
		\begin{equation}\label{eqn:UYeps}
			\sum_{n=1}^N \|y_n-y_{n-1}\|_\infty < \eps \quad \text{ and }\quad  \sum_{n=1}^N \|U_n-U_{n-1}\|_\infty < \eps.
		\end{equation}
		Let $X_A=X_0=L(Y_0)$ and $X_B=X_N=L(Y_N)$. Note that $y_A$ and $y_B$ are continuous and increasing, by Definition \eqref{map:EultoLag}. Thus for any $x\in\R$, there are $\xi_A$ and $\xi_B$ such that $y_A(\xi_A)=x=y_B(\xi_B)$. Substituting this into the difference of the $u$'s, we get
		\begin{align*}
			|u_A(x)-u_B(x)| 
			&= |u_A(y_A(\xi_A)) - u_B(y_A(\xi_A))|\\
			&\leq |u_A(y_A(\xi_A)) - u_B(y_B(\xi_A))| + |u_B(y_B(\xi_A)) - u_B(y_A(\xi_A))|\\
			&= |U_A(\xi_A) - U_B(\xi_A)| + \left| \int_{y_A(\xi_A)}^{y_B(\xi_A)} u_{B,x}(x)\ dx \right|\\
			&\leq \|U_A-U_B\|_\infty + \sqrt{|y_A(\xi_A)-y_B(\xi_A)|} \sqrt{\left| \int_{y_A(\xi_A)}^{y_B(\xi_A)} u_{B,x}^2(x)\ dx\right |}\\
			&\leq \sum_{n=1}^N \|U_n-U_{n-1}\|_\infty + \sqrt{\sum_{n=1}^N \|y_n-y_{n-1}\|_\infty}\sqrt{M}\\
			&< \eps + \sqrt{\eps M},
		\end{align*}
		where we have used the Cauchy Schwartz inequality to split our integral, and \eqref{ineq:ux2M}. 
		As this is satisfied for any $\eps > 0$, one has $u_A = u_B$.

		We now show $\mu_A=\mu_B$. 
		From \cite[Section 7.3]{MR1681462}, we need only to show that
		\begin{equation}\label{eqn:fmue}
			\int_\R f(x)\ d\mu_A(x) 
			= \int_\R f(x)\ d\mu_B(x), 
			\quad\ \text{ for all } f\in C_0(\R),
		\end{equation}
		where  $C_0(\R)$ denotes the set of all continuous functions whom vanish at $\pm \infty$. Using that $C_c^\infty(\R)$ is a dense subset of $C_0(\R)$, it suffices to show \eqref{eqn:fmue} for any $f\in C_c^\infty(\R)$.
		
		Let $f \in C_c^\infty(\R)$, then
		\begin{align*}
			\int_\R f(x) (d\mu_A - d\mu_B)(x) 
			&= \int_\R[ (f\circ y_A)(\xi)V_{A,\xi}(\xi) - (f\circ y_B)(\xi)V_{B,\xi}(\xi)]\ d\xi\\
			&= \int_\R (f\circ y_A)(\xi)(V_{A,\xi}(\xi)-V_{B,\xi}(\xi))\ d\xi\\
			&\quad\ +\int_\R [(f\circ y_A)(\xi) - (f\circ y_B)(\xi)]V_{B,\xi}(\xi)\ d\xi
		\end{align*}
		We show these two integrals equal zero.
		
		For the first of these two integrals use integration by parts,
		\[
			\int_\R (f\circ y_A)(\xi)(V_{A,\xi}(\xi)-V_{B,\xi}(\xi))\ d\xi = 
			- \int_\R y_{A,\xi}(\xi) (f'\circ y_A)(\xi)(V_{A}(\xi)-V_{B}(\xi))\ d\xi.
		\]
		Using that $0\leq y_{A,\xi}\leq 1$, we have that
		\[
			 \int_\R |y_{A,\xi}(\xi) (f'\circ y_A)(\xi)(V_{A}(\xi)-V_{B}(\xi))|\ d\xi
			\leq \|f'\|_1 \|V_A - V_B\|_\infty \leq \|f'\|_1 \eps,
		\]
		where we have used that \eqref{est:Xnorm} implies
		\[
			\|V_A - V_B\|_\infty \leq \sum_{n=1}^N \|X_n - X_{n-1}\| < \eps.
		\]
		
		For the second integral, we use
		\begin{align*}
			\int_\R |(f\circ y_A)(\xi) - (f\circ y_B)(\xi)|V_{B,\xi}(\xi)\ d\xi
			&\leq \|(f\circ y_A)(\xi) - (f\circ y_B)(\xi)\|_\infty \|V_{B,\xi}\|_1.
		\end{align*}
		We have that $\|V_{B,\xi}\|_1 \leq M$. Also,
		\[
			|(f\circ y_A)(\xi) - (f\circ y_B)(\xi)| 
			\leq \left| \int_{y_A(\xi)}^{y_B(\xi)} f'(\eta)\ d\eta \right| 
			\leq \|y_B - y_A\|_\infty \|f'\|_\infty\\
			< \eps \|f'\|_\infty,
		\]
		and thus
		\[
			\|(f\circ y_A)(\xi) - (f\circ y_B)(\xi)\|_\infty \|V_{B,\xi}\|_1 < \eps \|f'\|_\infty M.
		\]
		Once again, this is true for any $\eps>0$, and hence the integrals are zero, concluding the proof.
	\end{proof}
	
	From this, we can conclude with our final Lipschitz stability result.
	\begin{thm}
		Let $\hY_A(t)=(u_A, \mu_A)(t)$ and $\hY_B(t)=(u_B, \mu_B)(t)$ be $\alpha$-dissipative solutions at time $t$ to the problem
		\begin{equation}\label{eqn:HSFinThm}
			u_t(x,t) + uu_x(x,t) = \frac{1}{4}\Bigg(\int_{-\infty}^x u_x^2(y,t)\ dy - \int_x^{+\infty}u_x^2(y,t)\ dy\Bigg),
		\end{equation}
		with initial data $\hY_{A}(0), \hY_{B}(0) \in \mD_{0,M}$ respectively. Then
		\[
			d_M(\hY_A(t),\hY_B(t)) \leq e^{\frac{3}{2} t}d_M(\hY_{A}(0),\hY_{B}(0)).
		\]
	\end{thm}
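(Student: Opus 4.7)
The plan is to mimic the argument used for Theorem \ref{thm:LagStab}, reducing the stability of $d_M$ to an application of Corollary \ref{cor:time} on individual triples $(u,\mu,\nu)$ in $\mathcal{D}$, with a double $\epsilon$-approximation: one over the finite sequences defining $d_M$, and one over the admissible past energies $\nu \in \mathcal{V}(\hat{Y})$ defining $J_\mathcal{D}$.

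Fix $\epsilon>0$. First, using the definition of $d_M$, choose a finite sequence $\{\hat Y_n(0)\}_{n=0}^N$ in $\mathcal{D}_{0,M}$ with $\hat Y_0(0)=\hat Y_A(0)$ and $\hat Y_N(0)=\hat Y_B(0)$ such that
\[
\sum_{n=1}^N J_\mathcal{D}\bigl(\hat Y_n(0),\hat Y_{n-1}(0)\bigr) < d_M(\hat Y_A(0),\hat Y_B(0))+\epsilon.
\]
Then, for each consecutive pair, use the definition of $J_\mathcal{D}$ in \eqref{eqn:JmD} to pick $\nu_n^-(0)\in\mathcal{V}(\hat Y_n(0))$ and $\nu_{n-1}^+(0)\in\mathcal{V}(\hat Y_{n-1}(0))$ with
\[
d_\mathcal{D}\bigl((u_n,\mu_n,\nu_n^-)(0),(u_{n-1},\mu_{n-1},\nu_{n-1}^+)(0)\bigr) < J_\mathcal{D}\bigl(\hat Y_n(0),\hat Y_{n-1}(0)\bigr)+\epsilon/N.
\]
Note that $\nu_n^-$ and $\nu_n^+$ need not agree, since they arise in different comparisons; this is harmless because $d_M$ is built by summing separate $J_\mathcal{D}$ terms and not by concatenating triples.

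Next, evolve each triple forward by the $\alpha$-dissipative flow to obtain $(u_n,\mu_n,\nu_n^\pm)(t)$. Two facts must be checked: (i) the Eulerian flow preserves $\mathcal{D}$, so that $\nu_n^\pm(t) \in \mathcal{V}(\hat Y_n(t))$, and (ii) total energy is non-increasing, so $\mu_n(t)(\mathbb{R})\le M$ and $\hat Y_n(t)\in\mathcal{D}_{0,M}$; both follow from the construction of solutions recorded in \cite{MR3860266}, using that the $(u,\mu)$-component of an $\alpha$-dissipative solution depends only on $(u_0,\mu_0)$ (as illustrated in Example \ref{exmp:initDens}), so the evolved object $\hat Y_n(t)$ is well-defined independently of the chosen $\nu$. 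Applying Corollary \ref{cor:time} to each pair gives
\[
d_\mathcal{D}\bigl((u_n,\mu_n,\nu_n^-)(t),(u_{n-1},\mu_{n-1},\nu_{n-1}^+)(t)\bigr) \leq e^{\frac{3}{2}t}\, d_\mathcal{D}\bigl((u_n,\mu_n,\nu_n^-)(0),(u_{n-1},\mu_{n-1},\nu_{n-1}^+)(0)\bigr).
\]
Taking the infimum over $\mathcal{V}(\hat Y_n(t))\times \mathcal{V}(\hat Y_{n-1}(t))$ on the left yields $J_\mathcal{D}(\hat Y_n(t),\hat Y_{n-1}(t))$ upstairs, and summing in $n$ and invoking the defining property of $d_M$ at time $t$,
\[
d_M(\hat Y_A(t),\hat Y_B(t)) \leq \sum_{n=1}^N J_\mathcal{D}\bigl(\hat Y_n(t),\hat Y_{n-1}(t)\bigr) \leq e^{\frac{3}{2}t}\bigl(d_M(\hat Y_A(0),\hat Y_B(0))+2\epsilon\bigr).
\]
Letting $\epsilon\to 0$ finishes the argument.

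The main obstacle is the bookkeeping at step (i)--(ii): one must verify that the evolution of each chosen $\nu_n^\pm(0)\in\mathcal{V}(\hat Y_n(0))$ actually stays in $\mathcal{V}(\hat Y_n(t))$ and that the induced $(u_n,\mu_n)(t)$ coincides with $\hat Y_n(t)$, so that the $J_\mathcal{D}$-bound at time $t$ is legitimately controlled by an instance of $d_\mathcal{D}$. Once this consistency is in place, the rest is a direct transport of the Lagrangian argument through the definition \eqref{def:dM}.
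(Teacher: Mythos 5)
Your proposal is correct and follows essentially the same route as the paper: approximate $d_M(\hY_A(0),\hY_B(0))$ by a near-optimal finite sequence, evolve the associated triples by the $\alpha$-dissipative flow, apply Corollary~\ref{cor:time} termwise, and pass back to $d_M$ at time $t$. The one point where you diverge is the bookkeeping of the past-energy measures: the paper selects a single sequence of full triples $\{Y_n(0)\}_{n=0}^N$ in $\mD$ with $\sum_{n=1}^N d_\mD(Y_n(0),Y_{n-1}(0)) < d_M(\hY_A(0),\hY_B(0))+\eps$, so each $\hY_n$ carries one fixed $\nu_n$, whereas you unfold the double infimum in \eqref{def:dM}--\eqref{eqn:JmD} and permit the two comparisons involving $\hY_n$ to use different measures $\nu_n^\pm$. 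Your version is, if anything, the more faithful to the definition of $d_M$, which infimizes over the $\nu$'s independently for each adjacent pair; and both versions rest on exactly the two consistency facts you flag, namely that the flow sends $\mathcal{V}(\hY_n(0))$ into $\mathcal{V}(\hY_n(t))$ and that $(u_n,\mu_n)(t)$ is independent of the chosen $\nu$, which the paper likewise takes from the cited existence construction rather than reproving.
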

	\begin{proof}
		Let $\eps > 0$, and choose a finite sequence $ \{Y_i(t)\}_{i=0}^N $ of $\alpha$-dissipative solutions to the partial differential equation \eqref{eqn:HSFinThm} in $\mD$, with initial data$ \{Y_i(0)\}_{i=0}^N $ in $\mD$ satisfying $(u_0, \mu_0)(0)=(u_A,\mu_A)(0), (u_N, \mu_N)(0)=(u_B,\mu_B)(0)$, $\mu_i(\R)\leq M$ for all $i=1,\dots ,N$, and such that
		\[
			\sum_{n=1}^N d_\mD(Y_n(0),Y_{n-1}(0)) < d_M(\hY_{A}(0), \hY_{B}(0)) + \eps.
		\]
		Then, we have using Corollary~\ref{cor:time}
		\begin{align*}
			d_M(\hY_A(t),\hY_B(t)) 
			&\leq \sum_{n=1}^N d_\mD\Big(Y_n(t),Y_{n-1}(t)\Big)\\
			&\leq e^{\frac{3}{2}t}\sum_{n=1}^N d_\mD\Big(Y_n(0),Y_{n-1}(0)\Big)\\
			&< e^{\frac{3}{2}t}(d_M(\hY_{A}(0), \hY_{B}(0)) + \eps).
		\end{align*}
		As one can construct such a relation for any $\eps>0$, we obtain the required result. 
	\end{proof}

	\appendix
	\section{Examples}
	\begin{exmp}\label{exmp:adiss}
		We compute an $\alpha$-dissipative example with $\alpha = \frac{1}{3}$. Given
		\[
			u_0(x) = 
			\begin{cases}
				1 ,&\mbox\ x\leq -2 ,\\
				-1-x ,&\mbox\ -2 < x\leq -1 ,\\
				0 ,&\mbox\ -1 < x\leq 1 ,\\
				1-x ,&\mbox\ 1 < x\leq 2 ,\\
				-1 ,&\mbox\ 2<x,
			\end{cases}
			\quad\
			\mu_0=\nu_0=u_{0,x}^2(x)\ dx,
		\]
		so that
		\[
			\mu_0((-\infty,x))=\nu_0((-\infty,x))=
			\begin{cases}
				0, &\mbox\ x\leq -2,\\
				x+2, &\mbox\ -2 < x\leq -1,\\
				1, &\mbox\ -1 < x\leq 1,\\
				x, &\mbox\ 1 < x\leq 2,\\
				2, &\mbox\ 2 < x,
			\end{cases}
		\]
		then the transformation $L$, given by Definition~\ref{map:EultoLag}, yields
		\[
			y_0(\xi) \coloneqq
			\begin{cases}
				\xi ,&\mbox\ \xi\leq -2 ,\\
				-1+\frac{1}{2}\xi ,&\mbox\ -2 < \xi\leq 0 ,\\
				-1+\xi ,&\mbox\ 0 < \xi\leq 2 ,\\
				\frac{1}{2}\xi ,&\mbox\ 2 < \xi\leq 4 ,\\
				-2+\xi ,&\mbox\ 4 < \xi,
			\end{cases}
			\quad\
			U_0(\xi) =
			\begin{cases}
				1 ,&\mbox\ \xi\leq -2 ,\\
				-\frac{1}{2}\xi ,&\mbox\ -2 < \xi\leq 0 ,\\
				0 ,&\mbox\ 0 < \xi\leq 2 ,\\
				1-\frac{1}{2}\xi ,&\mbox\ 2 < \xi\leq 4 ,\\
				-1 ,&\mbox\ 4< \xi,
			\end{cases}
		\]
		and
		\[
			V_0(\xi)=H_0(\xi) =
			\begin{cases}
				0 ,&\mbox\ \xi\leq -2 ,\\
				1+\frac{1}{2}\xi ,&\mbox\ -2 < \xi\leq 0 ,\\
				1 ,&\mbox\ 0 < \xi\leq 2 ,\\
				\frac{1}{2}\xi ,&\mbox\ 2 < \xi\leq 4 ,\\
				2 ,&\mbox\ 4 < \xi.
			\end{cases}
		\]
		Next, we determine for which points $\xi\in \mathbb{R}$  wave breaking will occur and when. Using \eqref{def:tau}, we have
		\[
			\tau(\xi) =
			\begin{cases}
				2 ,&\mbox\ \xi\in (-2,0)\cup (2,4) ,\\
				\infty ,&\mbox\ \text{otherwise}.
			\end{cases}
		\]
		Computing the solution using \eqref{eqn:LagSys}, one obtains
		\[
			y(\xi, t) =
				\begin{cases}
					\begin{cases}
						 t - \frac{1}{4}t^2+\xi , &\mbox\ \xi \leq -2,\\
						-1 + \frac{(t-2)^2}{8}\xi, &\mbox\ -2 <  \xi \leq 0,\\
					       - 1+\xi, &\mbox\ 0 < \xi \leq 2,\\
						t-\frac14 t^2+\frac{(t-2)^2}{8}\xi, &\mbox\ 2 < \xi \leq 4,\\
						-2 -t + \frac{1}{4}t^2+\xi, &\mbox\ 4<\xi,\\
					\end{cases} 
					&\mbox\ 0\leq t<2,\\
					\begin{cases}
						 \frac{1}{3}+\frac{2}{3}t-\frac16 t^2+\xi, &\mbox\ \xi \leq -2,\\
						-1 + \frac{(t-2)^2}{12}\xi, &\mbox\ -2 < \xi \leq 0,\\
						- 1+\xi &\mbox\ 0 < \xi \leq 2,\\
						\frac13 +\frac23 t -\frac16 t^2+\frac{(t-2)^2}{12}\xi, &\mbox\ 2 < \xi \leq 4,\\
						- \frac{7}{3} - \frac{2}{3}t+\frac16 t^2+\xi, &\mbox\ 4< \xi,\\
					\end{cases}
					&\mbox\ 2\leq t,
				\end{cases}
		\]
		\[
			U(\xi, t) =
			\begin{cases}
				\begin{cases}
					1-\frac{1}{2}t, &\mbox\ \xi \leq -2,\\
					\frac{(t-2)}{4}\xi, &\mbox\ -2 < \xi \leq 0,\\
					0, &\mbox\ 0 < \xi \leq 2,\\
					1-\frac12 t+\frac{(t-2)}{4}\xi, &\mbox\ 2 < \xi \leq 4,\\
					-1+\frac{1}{2}t, &\mbox\ 4 < \xi,\\
				\end{cases} 
				&\mbox\ 0\leq t<2,\\
				\begin{cases}
					\frac{2}{3}-\frac{1}{3}t, &\mbox\ \xi \leq -2,\\
					\frac{(t-2)}{6}\xi, &\mbox\ -2 < \xi \leq 0,\\
					0, &\mbox\ 0 < \xi \leq 2,\\
					\frac23 -\frac13 t+\frac{(t-2)}{6}\xi, &\mbox\ 2 < \xi \leq 4,\\
					-\frac{2}{3}+ \frac{1}{3}t, &\mbox\ 4 < \xi,\\
				\end{cases}
				&\mbox\ 2\leq t,
			\end{cases}
		\]

		\[ H(\xi,t)=H_0(\xi), \quad 0\leq t,\]
		and 
		\[
			V(\xi, t) =
			\begin{cases}
				H(\xi), &\mbox\ 0\leq t<2,\\
				\begin{cases}
					0, &\mbox\ \xi \leq -2,\\
					\frac{2}{3}+\frac{1}{3}\xi, &\mbox\ -2 < \xi \leq 0,\\
					\frac{2}{3}, &\mbox\ 0 < \xi \leq 2,\\
					\frac{1}{3}\xi, &\mbox\ 2 < \xi \leq 4,\\
					\frac{4}{3}, &\mbox\ 4 < \xi,\\
				\end{cases}
				&\mbox\ 2\leq t.
			\end{cases}
		\]
		
 Using Definition~\ref{map:LagtoEul}, we can finally compute the solution $(u,\mu,\nu)$, which is given by
 		\[
			u(x, t) = 
			\begin{cases}
				\begin{cases}
					1-\frac{1}{2}t, &\mbox\ x\leq -2+t-\frac{1}{4}t^2,\\
					\frac{2+2x}{t-2}, &\mbox\ -2+t-\frac{1}{4}t^2 <  x\leq -1,\\
					0, &\mbox\ -1 < x\leq 1,\\
					\frac{-2+2x}{t-2}, &\mbox\ 1 < x\leq 2-t+\frac{1}{4}t^2,\\
					-1+\frac{1}{2}t, &\mbox\ 2-t+\frac{1}{4}t^2 < x,
				\end{cases}
				&\mbox\ t<2,\\
				0, &\mbox\ t=2\\
				\begin{cases}
					\frac{2}{3}-\frac{1}{3}t, &\mbox\ x\leq -\frac{5}{3}+\frac23 t-\frac{1}{6}t^2,\\
					\frac{2+2x}{t-2}, &\mbox\ -\frac{5}{3}+\frac23 t-\frac{1}{6}t^2 < x\leq -1,\\
					0, &\mbox\ -1 < x\leq 1,\\
					\frac{-2+2x}{t-2}, &\mbox\ 1 < x\leq \frac{5}{3}-\frac{2}{3}t+\frac{1}{6}t^2,\\
					-\frac{2}{3}+\frac13 t, &\mbox\ \frac{5}{3}-\frac{2}{3}t+\frac{1}{6}t^2 < x,
				\end{cases}
				&\mbox\  2<t,
			\end{cases}
		\]
		\[
			\mu(t,(-\infty, x)) = 
			\begin{cases}
				\begin{cases}
					0, &\mbox\ x\leq -2+t-\frac{1}{4}t^2,\\
					1+\frac{4+4x}{(t-2)^2}, &\mbox\ -2+t-\frac{1}{4}t^2 < x\leq -1,\\
					1, &\mbox\ -1 < x\leq 1,\\
					1+\frac{-4+4x}{(t-2)^2}, &\mbox\ 1 < x\leq 2-t+\frac{1}{4}t^2,\\
					2, &\mbox\ 2-t+ \frac{1}{4}t^2 < x,
				\end{cases}
				&\mbox\ t<2,\\
				\begin{cases}
						0, &\mbox\ x\leq -1,\\
						\frac{2}{3}, &\mbox\ -1 < x\leq 1,\\
						\frac{4}{3}, &\mbox\ 1<x,
				\end{cases}
				&\mbox\ t=2\\
				\begin{cases}
					0, &\mbox\ x\leq -\frac{5}{3}+\frac23 t-\frac{1}{6}t^2,\\
					\frac{2}{3}+\frac{4+4x}{(t-2)^2}, &\mbox\ -\frac{5}{3}+\frac23 t-\frac{1}{6}t^2 <  x\leq -1,\\
					\frac{2}{3}, &\mbox\ -1 < x\leq 1,\\
					\frac{2}{3}+\frac{-4+4x}{(t-2)^2}, &\mbox\ 1 < x\leq \frac{5}{3}-\frac{2}{3}t+\frac{1}{6}t^2,\\
					\frac{4}{3}, &\mbox\ \frac{5}{3}-\frac{2}{3}t+\frac{1}{6}t^2 < x,
				\end{cases}
				&\mbox\  2<t,
			\end{cases}
		\]
		and 
		\[
			\nu(t,(-\infty, x)) = 
			\begin{cases}
				\begin{cases}
					0, &\mbox\ x\leq -2+t-\frac{1}{4}t^2,\\
					1+\frac{4+4x}{(t-2)^2}, &\mbox\ -2+t-\frac{1}{4}t^2 < x\leq -1,\\
					1, &\mbox\  -1 < x\leq 1,\\
					1+\frac{-4+4x}{(t-2)^2}, &\mbox\ 1 < x\leq 2-t+\frac{1}{4}t^2,\\
					2, &\mbox\ 2-t+ \frac{1}{4}t^2 < x,
				\end{cases}
				&\mbox\ t<2,\\
				\begin{cases}
						0, &\mbox\ x\leq -1,\\
						1, &\mbox\ -1 < x\leq 1,\\
						2, &\mbox\ 1<x,
				\end{cases}
				&\mbox\ t=2\\
				\begin{cases}
					0, &\mbox\ x\leq -\frac{5}{3}+\frac23 t-\frac{1}{6}t^2,\\
					1+\frac{6+6x}{(t-2)^2}, &\mbox\ -\frac{5}{3}+\frac23 t-\frac{1}{6}t^2 <  x\leq -1,\\
					1, &\mbox\ -1 < x\leq 1,\\
					1+\frac{-6+6x}{(t-2)^2}, &\mbox\ 1 < x\leq \frac{5}{3}-\frac{2}{3}t+\frac{1}{6}t^2,\\
					2, &\mbox\ \frac{5}{3}-\frac{2}{3}t+\frac{1}{6}t^2 < x,
				\end{cases}
				&\mbox\ 2<t.
			\end{cases}
		\]
		Notice that $\nu$ carries the initial energy forward in time, while $\mu$ is the actual energy in the system at the current time. Thus the difference in the two is the lost energy.	
		\end{exmp}


\begin{thebibliography}{10}

\bibitem{MR2191785}
Alberto Bressan and Adrian Constantin.
\newblock Global solutions of the {H}unter-{S}axton equation.
\newblock {\em SIAM J. Math. Anal.}, 37(3):996--1026, 2005.

\bibitem{MR2653980}
Alberto Bressan, Helge Holden, and Xavier Raynaud.
\newblock Lipschitz metric for the {H}unter-{S}axton equation.
\newblock {\em J. Math. Pures Appl. (9)}, 94(1):68--92, 2010.

\bibitem{MR3941227}
Jos\'{e}~Antonio Carrillo, Katrin Grunert, and Helge Holden.
\newblock A {L}ipschitz metric for the {H}unter-{S}axton equation.
\newblock {\em Comm. Partial Differential Equations}, 44(4):309--334, 2019.

\bibitem{MR3451933}
Tomasz Cie\'{s}lak and Grzegorz Jamr\'{o}z.
\newblock Maximal dissipation in {H}unter-{S}axton equation for bounded energy
  initial data.
\newblock {\em Adv. Math.}, 290:590--613, 2016.

\bibitem{MR2796054}
Constantine~M. Dafermos.
\newblock Generalized characteristics and the {H}unter-{S}axton equation.
\newblock {\em J. Hyperbolic Differ. Equ.}, 8(1):159--168, 2011.

\bibitem{MR1677529}
Hui-Hui Dai and Maxim Pavlov.
\newblock Transformations for the {C}amassa-{H}olm equation, its high-frequency
  limit and the {S}inh-{G}ordon equation.
\newblock {\em J. Phys. Soc. Japan}, 67(11):3655--3657, 1998.

\bibitem{MR1681462}
Gerald~B. Folland.
\newblock {\em Real analysis}.
\newblock Pure and Applied Mathematics (New York). John Wiley \& Sons, Inc.,
  New York, second edition, 1999.
\newblock Modern techniques and their applications, A Wiley-Interscience
  Publication.

\bibitem{MR3005543}
Katrin Grunert, Helge Holden, and Xavier Raynaud.
\newblock Global solutions for the two-component {C}amassa-{H}olm system.
\newblock {\em Comm. Partial Differential Equations}, 37(12):2245--2271, 2012.

\bibitem{MR3007728}
Katrin Grunert, Helge Holden, and Xavier Raynaud.
\newblock Lipschitz metric for the {C}amassa-{H}olm equation on the line.
\newblock {\em Discrete Contin. Dyn. Syst.}, 33(7):2809--2827, 2013.

\bibitem{MR3295963}
Katrin Grunert, Helge Holden, and Xavier Raynaud.
\newblock A continuous interpolation between conservative and dissipative
  solutions for the two-component {C}amassa-{H}olm system.
\newblock {\em Forum Math. Sigma}, 3:Paper No. e1, 73, 2015.

\bibitem{MR3860266}
Katrin Grunert and Anders Nordli.
\newblock Existence and {L}ipschitz stability for {$\alpha$}-dissipative
  solutions of the two-component {H}unter-{S}axton system.
\newblock {\em J. Hyperbolic Differ. Equ.}, 15(3):559--597, 2018.

\bibitem{MR2372478}
Helge Holden and Xavier Raynaud.
\newblock Global conservative solutions of the {C}amassa-{H}olm equation---a
  {L}agrangian point of view.
\newblock {\em Comm. Partial Differential Equations}, 32(10-12):1511--1549,
  2007.

\bibitem{MR1135995}
John~K. Hunter and Ralph Saxton.
\newblock Dynamics of director fields.
\newblock {\em SIAM J. Appl. Math.}, 51(6):1498--1521, 1991.

\bibitem{MR3573580}
Anders Nordli.
\newblock A {L}ipschitz metric for conservative solutions of the two-component
  {H}unter-{S}axton system.
\newblock {\em Methods Appl. Anal.}, 23(3):215--232, 2016.

\bibitem{MR1799274}
Ping Zhang and Yuxi Zheng.
\newblock Existence and uniqueness of solutions of an asymptotic equation
  arising from a variational wave equation with general data.
\newblock {\em Arch. Ration. Mech. Anal.}, 155(1):49--83, 2000.

\end{thebibliography}
\end{document}